\documentclass[12pt]{article}

\usepackage[colorinlistoftodos]{todonotes}

\usepackage{amssymb}
\usepackage{amstext}
\usepackage{amsgen}
\usepackage{amsxtra}
\usepackage{amsgen}
\usepackage{amsmath}
\usepackage{amsfonts,amsthm, esint}
\usepackage[english]{babel}

\newcommand{\REV}[1]{{#1}} 
\newcommand{\REVV}[1]{{#1}} 
\newcommand{\REVTWO}[1]{{\color{blue}#1}} 

\newtheorem{theorem}{Theorem}[section]
\newtheorem{lemma}[theorem]{Lemma}
\newtheorem{e-proposition}[theorem]{Proposition}
\newtheorem{corollary}[theorem]{Corollary}
\newtheorem{assumption}{Assumption}
\newtheorem{proposition}[theorem]{Proposition}
\newtheorem{e-definition}[theorem]{Definition\rm}
\newtheorem{remark}{\it Remark\/}



\def\eps{\varepsilon}

\def\RR{\mathbb{R}}

\def\cC{\mathcal{C}}
\def\cP{\mathcal{P}}

\newcommand{\N}{\mathbb{N}}

\newcommand{\R}{\mathbb{R}}




\def\og{\leavevmode\raise.3ex\hbox{$\scriptscriptstyle\langle\!\langle$~}}
\def\fg{\leavevmode\raise.3ex\hbox{~$\!\scriptscriptstyle\,\rangle\!\rangle$}}

\hoffset=-2.5truecm
\voffset=-2truecm
\textwidth=18truecm
\textheight=24truecm

\title{Uniqueness of strong solutions and weak-strong stability in a system of cross-diffusion equations}
\author{Judith Berendsen$^{1}$, Martin Burger$^{2}$, Virginie Ehrlacher$^{3,4}$, Jan-Frederik Pietschmann$^{1,*}$
\\
{\footnotesize $^1$ Fakult\"at f\"ur Mathematik, Technische Universit\"at Chemnitz, Reichenhainer Stra\ss{}e 41, Germany}
\\
{\footnotesize $^2$ Department Mathematik, Friedrich-Alexander Universit\"at Erlangen-N\"urnberg, Cauerstrasse 9, 91058 Erlangen, Germany}
\\
{\footnotesize $^3$ CERMICS, \'Ecole des Ponts ParisTech, 77455 Marne-La-Vall\'ee Cedex 2, France}
\\
{\footnotesize $^4$ Inria Paris, MATHERIALS project-team, 2 rue Simone Iff, CS 42112, 75589 Paris Cedex 12, France}
\\
{\footnotesize $^*$ \textit{Corresponding author}. Email: jfpietschmann@math.tu-chemnitz.de}
}
\date{\today}

\begin{document}

\maketitle
\selectlanguage{english}

\begin{abstract}
Proving the uniqueness of solutions to multi-species cross-diffusion systems is a difficult task in the general case, and there exist very few results in this direction. 
In this work, we study a particular system with zero-flux boundary conditions for which the existence of a weak solution has been proven in~\cite{Ehrlacher2017}. 
Under additional assumptions on the value of the cross-diffusion coefficients, we are able to show the existence \REVV{and uniqueness of non-negative} strong solutions. 
The proof \REVV{of the existence }relies on the use of an appropriate \REVV{linearized problem} and a fixed-point argument. In addition, a weak-strong stability result is obtained for this system in dimension one which also implies uniqueness 
\REVV{of weak solutions}.
\end{abstract}

\section{Introduction}
Systems of partial differential equations with cross-diffusion have gained a lot of interest in recent years \cite{Kuefner1996,Chen2004,Chen2006,Lepoutre2012,Juengel2015bddness}. Such systems
appear in many applications, for instance the modelling of population dynamics of multiple species~\cite{Burger2015}, 
cell sorting or chemotaxis-like applications~\cite{Painter2002,Painter2009} or predator-swarm model~\cite{Chen2014}, and have been studied in different contexts~\cite{Aman1990,Le2006,Griepentrog2010,Griepentrog2004,chapman1,chapman2}. 

\medskip

In this work we focus our attention to a particular multi-species cross-diffusion system which reads as follows. Let $T>0$, $n,d\in \N^*$ and $\Omega \subset \R^d$ be a bounded regular domain. For $t\in (0,T)$ and $x\in \Omega$, 
we consider $(u_0(t,x),\ldots,u_n(t,x))$ to be a solution to the system of $n+1$ equations
\begin{equation}\label{eq:sys}
    \partial_t u_i - \nabla \cdot \left[\sum_{j=0, j\neq i}^n K_{ij} (u_j\nabla u_i-u_i\nabla u_j)\right] = 0  \text{ in }\quad (0,T) \times \Omega,\quad i=0,\ldots, n,
\end{equation}
supplemented with no-flux boundary conditions and some initial data, where $K_{ij}\geq 0$ for all $0\leq i \neq j \leq n$.

System~\eqref{eq:sys} can be seen as the (formal) limit of a microscopic stochastic lattice based model (see the Appendix for more details) and models the evolution of the local volumic fractions of a system composed of $n+1$ different species. The function $u_i(t,x)$ represents the value at some 
time $t\in [0,T]$ and point $x\in \Omega$ of the density or volumic fraction of the $i^{th}$ entity. 
In terms of modelling this means that the particles whose densities are given by the functions $u_i$ have a finite, positive size so that there is a maximal number of particles per given volume. 
This is often referred to as size exclusion (or exclusion process). From a modelling point of view, one is therefore interested in considering solutions to (\ref{eq:sys}) which satisfy
\begin{equation}\label{eq:volume}
\forall 0\leq i \leq n, \quad u_i(t,x)\geq 0 \quad \mbox{ and } \quad \sum_{i=0}^n u_i(t,x) =1, \quad \mbox{ a.e. in }(0,T)\times \Omega.  
\end{equation}

\medskip

From an analysis point of view, it is not easy to prove the existence of solutions to cross-diffusion systems satisfying (\ref{eq:volume}), and uniqueness results are even harder to obtain. 
Recently, cross-diffusion systems which exhibit a (formal) gradient flow structure (see~\cite{Juengel2015},~\cite{Burger2010}, or~\cite{Schlake2011}) have drawn particular interest from mathematicians. 
Indeed, such a structure allows to show the existence of weak solutions in many situations, using the dissipation of the corresponding entropy to get a priori bounds which are enough to pass to the limit in a suitable approximation. 
This often also relies on the introduction of so-called entropy variables which can be used as a substitute for maximum principles which are not available for such systems. 
See e.g. \cite{Juengel2012}, \cite{Juengel2015} and \cite{Ehrlacher2017} for examples of this strategy. Also note that due to the degenerate structure of \eqref{eq:sys}, solutions sometimes have less regularity than in the usual parabolic case (e.g. for $n=2,K_{10}=K_{20}=1$ and $K_{12}=K_{21}=0,$ the solutions $u_i$ are only $L^2$ in space, not $H^1$, see \cite{Burger2010} for details). 

\medskip

The existence of weak solutions to (\ref{eq:sys}) is proved in~\cite{Ehrlacher2017}, using a general result of~\cite{Juengel2015bddness}, under the assumption that the 
cross-diffusion coefficients $(K_{ij})_{0\leq i \neq j \leq n}$ are assumed to be positive and to satisfy $K_{ij} = K_{ji}$ for all $0\leq i \neq j \leq n$. 
Most importantly in the context of our work, the existence of strong solutions and uniqueness of weak solutions was
so far only available in a very special situation, i.e. when all the self-diffusion coefficients $K_{ij}$ are equal to a constant $K$. In this case, system~\eqref{eq:sys} boils down to a system of $n+1$ independent heat equations, 
the analysis of which is easy. In the general case, to the best of our knowledge, no such results are available so far.

\medskip

The object of the present article is based on the following observation: The system \eqref{eq:sys} can be considered as a perturbation of a system of heat equations if the coefficients $K_{ij}$ are not too different 
from a fixed constant $K$. Indeed, we have 
\begin{align}\label{eq:heatperturbation}
 \partial_t u_i - K \Delta u_i = \mbox{\rm div} \left[ \sum_{j=0 }^n (K_{ij}-K) (u_j\nabla u_i-u_i\nabla u_j)\right]. 
\end{align}
Under the assumption that the quantities $\vert K_{ij} - K \vert$ are sufficiently small, we prove the existence \REVV{and uniqueness} of strong solutions to \eqref{eq:sys} \REVV{satisfying (\ref{eq:volume}) in dimension $d\leq 3$}. 
\REVV{In addition, we prove} a weak-strong stability estimate 
in dimension 1 which implies the uniqueness 
of weak solutions. A key issue in the proof is to construct approximations that preserve nonnegativity and the volume constraint.

This paper is organized as follows: in Section~\ref{sec:main} we state our main results. 
Section~\ref{sec:proofex} is devoted to the proof of the existence \REVV{ and uniqueness} of strong solutions to the cross-diffusion system we consider \REVV{in dimension $d\leq 3$}. 
Lastly, Section~\ref{sec:stability} details the proof of the weak-strong stability result we obtain in dimension 1. Let us mention that a weak-strong stability result is proved in~\cite{Chen2018} 
for a system similar to (\ref{eq:sys}), but with different assumptions on the coefficients $(K_{ij})_{0\leq i\neq j \leq n}$.

\section{Main results}\label{sec:main}

\subsection{Notation and preliminaries}

 Let $T>0$, $n,d\in \N^*$ and $\Omega \subset \R^d$ be a bounded regular domain. For $t\in (0,T)$ and $x\in \Omega$, 
we consider
$u(t,x):=(u_0(t,x),\ldots,u_n(t,x))$ solution to the system of $n+1$ equations:
\begin{equation}\label{eq:crossnplusone}
  \left\{
    \begin{array}{rll}
    \partial_t u_i - \nabla \cdot \left[\sum_{j=0, j\neq i}^n K_{ij} (u_j\nabla u_i-u_i\nabla u_j)\right]& = 0  & \text{ in }\quad (0,T) \times \Omega,\\
    \left[\sum_{j=0, j\neq i}^n K_{ij}(u_j\nabla u_i-u_i\nabla u_j)\right] \cdot \textbf{n} & = 0 & \text{ on }\quad (0,T) \times \partial\Omega, \\
  \end{array} \right.
  \quad i=0,\ldots, n,
\end{equation}
where $\textbf{n}$ denotes the unit outward pointing normal to the domain $\Omega$, and $(K_{ij})_{0\leq i \neq j \leq n}$ are non-negative coefficients. System~\eqref{eq:crossnplusone} is supplemented with the initial condition $u^0:=(u_0^0,\ldots,u_n^0)\in \left(L^1(\Omega)\right)^{n+1}$ 

\medskip

We make the following assumption on the values of the cross-diffusion coefficients $(K_{ij})_{0\leq i \neq j \leq n}$: 
\begin{assumption}\label{ass:A1}
For all $ 0 \leq i \neq j \leq n$, $K_{ij}>0$ and $K_{ij} = K_{ji}$.
\end{assumption}

\medskip

As mentioned in the introduction, such a system models the evolution of the local volumic fractions of a system composed of $n+1$ different species and we expect the nonnegativity and volume constraint \eqref{eq:volume} to hold.

\medskip

Let us denote by
$$
\mathcal P:= \left\{ u:=(u_0,\ldots,u_n) \in (0,+\infty)^{n+1}, \sum_{i=0}^n u_i = 1\right\} \; \mbox{ and } \; \mathcal D:= \left\{ U:=(u_1,\ldots,u_n) \in (0,+\infty)^{n}, \sum_{i=1}^n u_i < 1\right\}.
$$
We point out that for all $u:=(u_0,\ldots, u_n)\in \mathbb{R}^{n+1}$, $u$ belongs to $\mathcal P$ if and only if $U:=(u_1,\ldots,u_n)$ belongs to $\mathcal D$. Similarly, $u$ belongs to $\overline{\mathcal P}$ if and only if 
$U$ belongs to $\overline{\mathcal{D}}$. Let us mention that condition (\ref{eq:volume}) can be equivalently rewritten as $u(t,x)\in \overline{\mathcal P}$ for almost all $(t,x)\in (0,T)\times \Omega$.  
In what follows, we assume that the initial condition $u^0$ satisfies the following constraint:
\begin{equation}\label{ass:init}
 u^0(x) \in \overline{\mathcal P} \mbox{ for almost all }x\in\Omega.  
\end{equation}
Under Assumptions~\ref{ass:A1} and~\ref{ass:init}, it is easy to see (at least formally) that the dynamics of the system preserves the volume constraint, i.e. 
\begin{equation}\label{eq:volume2}
 \sum_{i=0}^n u_i(t,x)=1  \quad  \text{ a.e. in }(0,T) \times \Omega.
\end{equation}
However, proving the existence of (weak or strong) solutions to system~(\ref{eq:crossnplusone}) so that $u_i(t,x)\geq 0$ for all $0\leq i \leq n$ and almost all $(t,x)\in (0,T)\times \Omega$ is an intricate task 
from an analysis point of view. \\
The existence of weak solutions to system~(\ref{eq:crossnplusone}) satisfying (\ref{eq:volume}) is proved in~\cite{Ehrlacher2017} under Assumptions~\ref{ass:A1} and~\ref{ass:init} and is actually 
a consequence of Theorem~2 of~\cite{Juengel2015bddness}. Let us recall this result and the main arguments of its proof below.
Using (\ref{eq:volume2}) to express $u_0$ as $1 - \sum_{i=1}^n u_i$, system (\ref{eq:crossnplusone}) can be equivalently rewritten as a system of $n$ equations of the form
\begin{equation}\label{eq:cross}
\left\{
  \begin{array}{rll}
    \partial_t U - \nabla \cdot (A(U) \nabla U) &= 0 & \text{ on }\quad (0,T) \times \Omega,\\
    (A(U)\nabla U) \cdot \textbf{n}&= 0 & \text{ on }\quad (0,T) \times \partial\Omega,\\
  \end{array}
  \right.  
\end{equation}
where $U := (u_1, \ldots, u_n)$. The diffusion matrix $A$ is defined by
\begin{equation}\label{eq:defA1}
A: \left\{
\begin{array}{ccc}
\R^n & \to & \R^{n\times n}\\
U & \mapsto & A(U) := \left( A_{ij}(U)\right)_{1\leq i,j \leq n},\\
\end{array}
\right.
\end{equation}
with, for all $U:=(u_1,\ldots,u_n)\in \R^n$, 
\begin{equation}\label{eq:defA}
\begin{array}{lll}
A_{ii}(U)&= \sum_{j=1, j\neq i}^n (K_{ij}-K_{i0})u_j+K_{i0,} & \quad i=1,\ldots n,\\
A_{ij}(U)&= -(K_{ij}-K_{i0})u_i, & \quad i,j=1,\ldots, n,\; i \neq j.\\
\end{array}
\end{equation}

%

Theorem~2 of~\cite{Juengel2015bddness} gives sufficient conditions on the diffusion matrix $A$ for a general cross-diffusion system to have a weak solution so that $U(t,x)\in \overline{\mathcal{D}}$ for almost all $(t,x)\in (0,T)\times \Omega$. 
More precisely, Theorem~2 of~\cite{Juengel2015bddness} can be stated as follows.

\begin{theorem}[Theorem~2 of~\cite{Juengel2015bddness}]\label{th:Jungel}
Let $A \in \mathcal{C}^0(\overline{D}; \R^{n\times n})$ be a continuous matrix-valued field defined on $\overline{\mathcal{D}}$ satisfying the following assumptions:
\begin{itemize}
 \item[(H1)] there exists a bounded from below convex function $h\in \mathcal{C}^2(\mathcal{D}, \R)$ such that its derivative $Dh: \mathcal{D} \to \R^n$ is invertible on $\R^n$;
 \item[(H2)] there exists $\alpha >0$, and for all $1\leq i \leq n$ there exists $1\geq m_i >0$ such that, for all $z:=(z_1,\ldots,z_n)\in \R^n$ and all $U:=(u_1,\ldots,u_n)\in\mathcal{D}$, 
 $$
 z^T D^2h(U)A(U) z \geq \alpha \sum_{i=1}^n u_i^{2m_i-2}z_i^2.
 $$
\end{itemize}
Let $U^0\in L^1(\Omega; \mathcal{D})$ such that $w^0:= Dh(U^0) \in L^\infty(\Omega; \R^n)$. Then, there exists a weak solution $U$ with initial condition $U^0$ to
\begin{equation}\label{eq:crossdiffgen}
\left\{
  \begin{array}{rll}
    \partial_t U - \nabla \cdot (A(U) \nabla U) &= 0 & \text{ on }\quad (0,T) \times \Omega,\\
    (A(U)\nabla U) \cdot \textbf{n} &= 0 & \text{ on }\quad (0,T) \times \partial\Omega,\\
  \end{array}
  \right.
\end{equation}
such that for almost all $(t,x)\in (0,T)\times \Omega$, $U(t,x)\in \overline{D}$ with 
$$
U\in L^2_{\rm loc}((0,T); H^1(\Omega; \R^n)) \quad \mbox{ and } \partial_t U \in L^2_{\rm loc}((0,T); (H^1(\Omega;\R^n))').
$$
\end{theorem}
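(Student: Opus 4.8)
\emph{Proof strategy.} The plan is to run the \emph{boundedness-by-entropy} method. First I would pass to the so-called entropy variables $w:=Dh(U)$. By (H1), $Dh:\mathcal D\to\R^n$ is a bijection with inverse $(Dh)^{-1}:\R^n\to\mathcal D$, so any function $w:(0,T)\times\Omega\to\R^n$ yields $U:=(Dh)^{-1}(w)$ that \emph{automatically} takes values in $\mathcal D$; this is the device that replaces the missing maximum principle and delivers the $\overline{\mathcal D}$-confinement at no cost. Using $\nabla U=(D^2h(U))^{-1}\nabla w$, the flux becomes $A(U)\nabla U=B(w)\nabla w$ with $B(w):=A((Dh)^{-1}(w))\,(D^2h((Dh)^{-1}(w)))^{-1}$, and (H2) becomes the positive semidefiniteness and weighted coercivity of $B$: writing $\zeta=(D^2h(U))^{-1}z$ gives $z^\top B(w)z=\zeta^\top D^2h(U)A(U)\zeta\ge\alpha\sum_i u_i^{2m_i-2}\zeta_i^2\ge 0$.

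Next I would set up an approximate problem coupling an implicit Euler discretization in time (step $\tau=T/N$) with a higher-order elliptic regularization $\eps\big((-\Delta)^s+I\big)w$, where $s\in\N$ is chosen with $s>d/2$, so that $H^s(\Omega)$ embeds compactly into $L^\infty(\Omega)$. Given $w^{k-1}$ (equivalently $U^{k-1}=(Dh)^{-1}(w^{k-1})$), one seeks $w^k\in H^s(\Omega;\R^n)$ such that, for all $\phi\in H^s(\Omega;\R^n)$,
\[
\frac1\tau\int_\Omega\big(U(w^k)-U^{k-1}\big)\cdot\phi\,\d x+\int_\Omega\nabla\phi^\top B(w^k)\nabla w^k\,\d x+\eps\,(w^k,\phi)_{H^s}=0 .
\]
This nonlinear elliptic problem is solved by a Leray--Schauder fixed-point argument: freezing $B$ and the zeroth-order term at a given $y\in C^0(\overline\Omega;\R^n)$ gives a linear problem whose bilinear form is coercive on $H^s$ (semidefiniteness of $B(y)$ plus the term $\eps\|w\|_{H^s}^2$) and bounded (since $y$ bounded forces $U(y)$ into a compact subset of the open set $\mathcal D$, hence $B(y)$ bounded), so Lax--Milgram gives a solution operator $S\colon y\mapsto w$, which is continuous and compact because $H^s$ embeds compactly into $C^0(\overline\Omega)$; the uniform bound along the Leray--Schauder homotopy comes from testing with $w$ and using convexity of $h$, exactly as in the estimate below.

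The core a priori estimate is obtained by testing the scheme with $\phi=w^k=Dh(U(w^k))$. Convexity of $h$ gives pointwise $h(U(w^k))-h(U^{k-1})\le w^k\cdot(U(w^k)-U^{k-1})$, while coercivity of $B$ gives $\int_\Omega\nabla w^{k\top}B(w^k)\nabla w^k\,\d x\ge\alpha\sum_{i=1}^n m_i^{-2}\big\|\nabla (u_i^k)^{m_i}\big\|_{L^2}^2$. Hence the discrete entropy inequality
\[
\int_\Omega h(U^k)\,\d x+\alpha\tau\sum_{i=1}^n m_i^{-2}\big\|\nabla (u_i^k)^{m_i}\big\|_{L^2}^2+\eps\tau\|w^k\|_{H^s}^2\le\int_\Omega h(U^{k-1})\,\d x .
\]
Summation over $k$, together with $h$ bounded below and $w^0=Dh(U^0)\in L^\infty$, yields bounds uniform in $\tau$ and $\eps$ on $\sum_k\tau\|\nabla(u_i^k)^{m_i}\|_{L^2}^2$ and on $\eps\sum_k\tau\|w^k\|_{H^s}^2$; and $U^k=(Dh)^{-1}(w^k)\in\mathcal D$ gives $U^k\in\overline{\mathcal D}$ for free. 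Since $1\ge m_i>0$, the map $t\mapsto t^{1/m_i}$ is $C^1$ on the bounded range of $(u_i^k)^{m_i}$, so $\nabla u_i^k=\tfrac1{m_i}(u_i^k)^{1-m_i}\nabla(u_i^k)^{m_i}$ is in turn controlled in $L^2$, whence $U^{(\tau)}$ is bounded in $L^2(0,T;H^1)$ and the flux $A(U^{(\tau)})\nabla U^{(\tau)}$ in $L^2$; from the scheme this gives a uniform bound on the discrete time-increments of $U^{(\tau)}$ in $(H^s)'$.

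It then remains to pass to the limit $\tau,\eps\to0$ (e.g.\ $\eps=\tau$). Writing $U^{(\tau)}$ for the piecewise-constant-in-time interpolant, the bounds above plus a time-discrete Aubin--Lions (Dreher--J\"ungel) compactness lemma give a subsequence with $U^{(\tau)}\to U$ strongly in $L^2((0,T)\times\Omega)$ and a.e., $U^{(\tau)}\weak U$ in $L^2(0,T;H^1)$, hence (continuity of $A$, dominated convergence, strong$\times$weak) $A(U^{(\tau)})\nabla U^{(\tau)}\weak A(U)\nabla U$; the regularization terms vanish since $\eps\,\|w^{(\tau)}\|^2_{L^2(0,T;H^s)}$ is bounded, so $\eps^{1/2}\|w^{(\tau)}\|_{L^2(0,T;H^s)}\to0$. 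Passing to the limit in the weak formulation and in the discrete entropy inequality gives the claimed weak solution with $U\in L^2_{\mathrm{loc}}(0,T;H^1)$, $\partial_t U\in L^2_{\mathrm{loc}}(0,T;(H^1)')$ and $U(t,x)\in\overline{\mathcal D}$ a.e. The main obstacle is exactly the a priori control needed to run the fixed-point step \emph{without} the (degenerate, $u_i$-weighted) coercivity of (H2)---this is precisely why the higher-order $\eps$-regularization is inserted---together with, when some $m_i<1$, recovering enough control on $A(U)\nabla U$ and on $\partial_t U$ from an estimate that only bounds $\nabla u_i^{m_i}$; the nonstandard time-discrete compactness lemma is the other delicate ingredient.
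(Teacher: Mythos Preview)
Your sketch is correct and is precisely the boundedness-by-entropy argument from J\"ungel's original paper~\cite{Juengel2015bddness}: entropy variables $w=Dh(U)$ to enforce $U\in\mathcal D$ automatically, implicit Euler combined with an $H^s$-regularisation (with $s>d/2$) to obtain a coercive elliptic problem at each step, Leray--Schauder for the discrete solution, the discrete entropy inequality from testing with $w^k$, and a discrete Aubin--Lions lemma to pass to the limit. Note, however, that the present paper does \emph{not} actually prove Theorem~\ref{th:Jungel}; it merely quotes it from~\cite{Juengel2015bddness} and then indicates (in the paragraph following the statement) that the proof rests on the formal gradient-flow structure, before verifying hypotheses (H1)--(H2) for the specific entropy~\eqref{eq:defh} and matrix~\eqref{eq:defA}. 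So there is no ``paper's own proof'' to compare against beyond that remark, and your outline is essentially the proof the paper is citing.
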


The proof of Theorem~\ref{th:Jungel} relies on the fact that a system of the form~\eqref{eq:crossdiffgen} with a matrix-valued field $A$ satisfying conditions (H1)-(H2)
exhibits a (formal) gradient flow structure, which we detail below for the particular case of~\eqref{eq:cross} with $A$ defined by~\eqref{eq:defA}. 

To this end, let us introduce the entropy density $h$ given by
\begin{equation}\label{eq:defh}
   h: \left\{
   \begin{array}{ccc}
    \bar{\mathcal{D}} & \to & \mathbb{R} \\
    U:= (u_i)_{1\leq i \leq n} & \mapsto & \sum_{i=1}^n u_i \log u_i + (1-\sum_{i=1}^n u_i)\log (1-\sum_{i=1}^n u_i).\\
   \end{array} \right.
\end{equation}
and the corresponding entropy functional
\begin{equation}\label{eq:defE}
   \mathcal{E}:
   \begin{array}{ccc}
 L^{\infty}(\Omega,\bar{\mathcal{D}}) & \to & \mathbb{R} \\   
     U & \mapsto&  \int_{\Omega} h(U(x))\, dx .\\
   \end{array}
\end{equation}
It is proved in Lemma~2.3 of~\cite{Ehrlacher2017} that the function $h$ defined by (\ref{eq:defh}) satisfies conditions (H1) and (H2) of Theorem~\ref{th:Jungel} for the matrix-valued function $A$ defined by (\ref{eq:defA}) 
with $m_i = \frac{1}{2}$ for every $1\leq i  \leq n$ and $\alpha=\min_{1\leq i\neq j\leq n}K_{ij}$. Furthermore, we can rewrite the system \eqref{eq:cross} as
\begin{align*}
\partial_t U - \nabla \cdot (A(U)(D^2h(U))^{-1} \nabla D\mathcal{E}(U)) &= 0 \text{ on }\quad (0,T) \times \Omega, \\
(A(U)(D^2h(U))^{-1}\nabla D\mathcal{E}(U)) \cdot \normalfont\textbf{n} &= 0 \text{ on }\quad (0,T) \times \partial\Omega, \\
U(0,x)&=U^0(x) \quad \text{ a.e. in } \Omega.
\end{align*}
In this formulation, it becomes clear that the entropy functional $\mathcal{E}$ is a Lyapunov function for system (\ref{eq:cross}).

\medskip

The existence of weak solutions to~\eqref{eq:cross} satisfying $U(t,x)\in \overline{\mathcal D}$ almost everywhere is then a direct consequence of Theorem~2 of \cite{Juengel2015bddness} and Lemma~2.3 of~\cite{Ehrlacher2017}. More precisely, we have the following proposition
\begin{proposition}[Existence of weak solutions]\label{prop:weak}
 Let $u^0\in L^1(\Omega; \mathcal{P})$ and $U^0:=(u_1^0, \ldots, u_n^0)$. Let us assume in addition that $w^0:= Dh(U^0) \in L^\infty(\Omega; \R^n)$ with $h$ defined by (\ref{eq:defh}). 
 Then, there exists a weak solution $u$ with initial condition $u^0$ to \eqref{eq:crossnplusone} such that
 \begin{itemize}
  \item[(i)] $\displaystyle u\in L^2_{\rm loc}((0,T); H^1(\Omega; \R^{n+1}))$ and $\displaystyle \partial_t u \in L^2_{\rm loc}((0,T); (H^1(\Omega;\R^{n+1}))')$;
  \item[(ii)] for almost all $(t,x)\in (0,T)\times \Omega$, $u(t,x)\in \overline{\cP}$. 
 \end{itemize}
\end{proposition}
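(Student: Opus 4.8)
The plan is to deduce the proposition directly from Theorem~\ref{th:Jungel} applied to the reduced $n$-component system \eqref{eq:cross}, and then to lift the resulting solution $U$ back to a solution $u$ of the full $(n+1)$-component system \eqref{eq:crossnplusone} by setting $u_0 := 1 - \sum_{i=1}^n u_i$.

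First I would check that the hypotheses of Theorem~\ref{th:Jungel} hold for the matrix field $A$ from \eqref{eq:defA} and the entropy density $h$ from \eqref{eq:defh}. Continuity of $A$ on $\overline{\mathcal{D}}$ is clear, each entry being affine in $U$. Conditions (H1) and (H2) — with $m_i = \tfrac12$ for all $i$ and $\alpha = \min_{i\neq j} K_{ij} > 0$, positivity coming from Assumption~\ref{ass:A1} — are exactly the content of Lemma~2.3 of \cite{Ehrlacher2017}. Finally, from $u^0 \in L^1(\Omega;\mathcal{P})$ and the equivalence $u \in \mathcal{P} \iff U \in \mathcal{D}$ (resp. $u\in\overline{\mathcal P}\iff U\in\overline{\mathcal D}$) recalled above, we have $U^0 := (u_1^0,\ldots,u_n^0) \in L^1(\Omega;\mathcal{D})$, and by assumption $w^0 = Dh(U^0) \in L^\infty(\Omega;\mathbb{R}^n)$. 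Theorem~\ref{th:Jungel} then produces a weak solution $U$ of \eqref{eq:cross} with initial datum $U^0$ such that $U(t,x) \in \overline{\mathcal{D}}$ a.e., $U \in L^2_{\rm loc}((0,T);H^1(\Omega;\mathbb{R}^n))$ and $\partial_t U \in L^2_{\rm loc}((0,T);(H^1(\Omega;\mathbb{R}^n))')$.

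Next I would set $u_0 := 1 - \sum_{i=1}^n u_i$ and $u := (u_0,\ldots,u_n)$. The regularity claimed in (i) is immediate, since $u_0$ is an affine combination of $u_1,\ldots,u_n$; and the pointwise constraint (ii), namely $u(t,x) \in \overline{\mathcal{P}}$ a.e., follows from $U(t,x) \in \overline{\mathcal{D}}$ a.e. by the same equivalence. The initial condition $u(0,\cdot) = u^0$ holds because $U(0,\cdot) = U^0$ and $u_0(0,\cdot) = 1 - \sum_{i=1}^n u_i^0 = u_0^0$, using $u^0 \in \mathcal{P}$. It then remains to verify that $u$ is a weak solution of \eqref{eq:crossnplusone}. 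Write $J_i := \sum_{j=0,\,j\neq i}^n K_{ij}(u_j \nabla u_i - u_i \nabla u_j)$ for $i=0,\ldots,n$. For $i = 1,\ldots,n$, a short computation — splitting off the $j=0$ term in $J_i$ and substituting $\nabla u_0 = -\sum_{k=1}^n \nabla u_k$ — shows that $J_i = (A(U)\nabla U)_i$ with $A$ exactly as in \eqref{eq:defA}; hence these equations together with their no-flux boundary conditions are precisely the (weak formulation of the) system \eqref{eq:cross}, which $U$ solves. For the remaining equation $i=0$, I would sum the $J_i$ over all $i=0,\ldots,n$: pairing the $(i,j)$ and $(j,i)$ contributions and using the symmetry $K_{ij} = K_{ji}$ of Assumption~\ref{ass:A1}, one gets $\sum_{i=0}^n J_i \equiv 0$, hence $J_0 = -\sum_{i=1}^n J_i$. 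Consequently, in the weak (distributional) sense, $\partial_t u_0 - \nabla\cdot J_0 = -\sum_{i=1}^n\bigl(\partial_t u_i - \nabla\cdot J_i\bigr) = 0$, and likewise $J_0 \cdot \textbf{n} = -\sum_{i=1}^n J_i \cdot \textbf{n} = 0$, which completes the verification.

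The proof is thus essentially bookkeeping once Theorem~\ref{th:Jungel} and Lemma~2.3 of \cite{Ehrlacher2017} are available; the only step that needs a little care is the pair of algebraic identities — the equivalence of the $i=1,\ldots,n$ equations of \eqref{eq:crossnplusone} with \eqref{eq:cross}, and the antisymmetry/cancellation argument recovering the $i=0$ equation — which should be carried out at the level of the weak formulation (testing against $\varphi$ and integrating by parts) rather than only formally, but this is routine given the stated regularity of $U$.
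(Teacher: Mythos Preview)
Your proposal is correct and follows exactly the approach indicated in the paper, which simply states that the proposition is a direct consequence of Theorem~\ref{th:Jungel} (Theorem~2 of \cite{Juengel2015bddness}) together with Lemma~2.3 of \cite{Ehrlacher2017}. You have carefully written out the bookkeeping the paper leaves implicit --- verifying (H1)--(H2), lifting $U$ to $u$ via $u_0 = 1 - \sum_{i=1}^n u_i$, and checking the $i=0$ equation by the antisymmetry/cancellation argument --- all of which is correct and more detailed than what the paper provides.
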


\subsection{Main results}

The aim of this work is to prove the existence \REVV{ and uniqueness} of strong solutions to system (\ref{eq:cross}) satisfying (\ref{eq:volume}) under additional assumptions on the cross-diffusion coefficients $(K_{ij})_{0\leq i \neq j \leq n}$. 
Such a result holds \REVV{ in dimension $d\leq 3$}. For the particular case when $d=1$, we can also prove a weak-strong stability result which 
implies that there exists a unique weak solution to system~(\ref{eq:crossnplusone}) satisfying (\ref{eq:volume}) and that this solution is strong.

Before stating our main results, let us make a preliminary remark on the no-flux boundary conditions imposed on $U$ in (\ref{eq:cross}) which will be useful in the sequel. It is shown in \cite[Lemma~5]{Juengel2012} that the matrix 
$A(U)$ is invertible for all $U\in \overline{\mathcal{D}}$. Besides, for all $U\in L^1(\Omega; \overline{\mathcal{D}})$, $(A(U) \nabla U)\cdot \textbf{n} = A(U) \left( \nabla U \cdot \textbf{n}\right)$ on $\partial \Omega$. 
This implies that a solution 
$U$ to~\eqref{eq:cross} is equivalently a solution to the system
\begin{equation}\label{eq:cross2}
\left\{
  \begin{array}{rll}
    \partial_t U - \nabla \cdot (A(U) \nabla U) &= 0 & \text{ on }\quad (0,T) \times \Omega,\\
    \nabla U \cdot \textbf{n}&= 0 & \text{ on }\quad (0,T) \times \partial\Omega,\\
  \end{array}
  \right.
\end{equation}
and, denoting by $u_0: = 1 - \sum_{i=1}^n u_i$, $u:=(u_0, \ldots, u_n)$ is then equivalently a solution to
\begin{equation}\label{eq:crossnplusone2}
  \left\{
    \begin{array}{rll}
    \partial_t u_i - \nabla \cdot \left[\sum_{j=0, j\neq i}^n K_{ij} (u_j\nabla u_i-u_i\nabla u_j)\right]& = 0  & \text{ in }\quad (0,T) \times \Omega,\\
    \nabla u_i \cdot \textbf{n} & = 0 & \text{ on }\quad (0,T) \times \partial\Omega, \\
  \end{array} \right.
  \quad i=0,\ldots, n.
\end{equation}
Proving the existence of strong solutions to system~(\ref{eq:cross}) is then equivalent to proving the existence of strong solutions to system~\eqref{eq:crossnplusone2} and it will be more convenient for our analysis to consider the latter formulation in the sequel. 

To obtain this strong existence result, we make an additional assumption on the cross-diffusion coefficients $(K_{ij})_{0\leq i \neq j \leq n}$ which we detail hereafter.  For all 
$0\leq i \leq n$, let 
\REVTWO{
\begin{equation}\label{eq:defKi}
K^+:= \max_{0\leq j \neq i \leq n} K_{ij}, \quad K^-:= \min_{0\leq j \neq i \leq n} K_{ij}, \quad K:= \frac{K^+ + K^-}{2} \quad \mbox{ and } \kappa:= \frac{K^+ - K^-}{2}. 
\end{equation}
}

The additional assumption which we make from now on and in all the sequel reads as follows: 

\begin{assumption}\label{ass:A2}
$\displaystyle K \; > \; 2n\kappa$. 
\end{assumption}

\medskip

In other words, Assumption~\ref{ass:A2} means that all the coefficients $K_{ij}$ should be sufficiently close to one another. The motivation for considering such a situation stems from the following observation: if there exists a constant $K>0$ such that 
for all $0\leq i \neq j \leq n$, $K_{ij} = K$, then $\kappa = 0$ and system~(\ref{eq:crossnplusone2}) boils down to a system of $n+1$ independent heat equations for which the existence and uniqueness of strong solutions satisfying (\ref{eq:volume}) is obvious. \REVTWO{Let us point out direct consequences of Assumption \ref{ass:A2} that will be used frequently. We have 
	\begin{align}
	 |K_{ij} -K| \le \kappa\quad \text{ as well as }\quad K- 2n\kappa - \eps > 0
	\end{align}
	for $\eps > 0$ sufficiently small.
}
%
%

\medskip

We are now in position to state our two main results. 


\REVTWO{This lemma ensures that provided a strong solution exists, it is unique. Existence is dealt with in the following result.}
\begin{theorem}(Existence and uniqueness of strong solutions)\label{thm:existencestrong}
Let us assume \REVV{that $d\leq 3$ and} that Assumptions~\ref{ass:A1} and~\ref{ass:A2} hold. Then, for every initial datum $u^0\in [H^1(\Omega)]^{n+1}$, with $u^0(x) \in\overline{\mathcal{P}}$ for almost all $x\in\Omega$, 
there exists a \REV{unique} strong solution $u$ to \eqref{eq:crossnplusone2} (or equivalently to \eqref{eq:crossnplusone}) such that
\begin{itemize}
 \item [(i)] $\displaystyle u \in [L^2((0,T), H^2(\Omega)) \cap H^1((0,T), L^2(\Omega))]^{n+1}$;
 \item[(ii)] $u(t,x)\in \overline{\cP}$ for almost all $(t,x)\in (0,T)\times \Omega$.
\end{itemize}
\end{theorem}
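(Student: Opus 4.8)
The plan is to prove existence via a linearization/fixed-point argument on the equivalent formulation \eqref{eq:crossnplusone2}, exploiting the heat-equation perturbation structure \eqref{eq:heatperturbation}, and to treat nonnegativity and the volume constraint as properties to be \emph{preserved} by the iteration rather than recovered a posteriori. The key observation is that summing the $n+1$ equations in \eqref{eq:crossnplusone2} over $i$ gives $\partial_t(\sum_i u_i) - \nabla\cdot[\cdots] = 0$, and because $K_{ij}=K_{ji}$ the bracketed flux sums to $\sum_{i}\sum_{j\neq i}K_{ij}(u_j\nabla u_i - u_i\nabla u_j) = 0$ by antisymmetry; hence $S:=\sum_i u_i$ solves a homogeneous heat equation $\partial_t S = \Delta(\text{something})$ — more precisely, using \eqref{eq:heatperturbation}, $\partial_t S - K\Delta S = \mathrm{div}[\sum_i\sum_j (K_{ij}-K)(u_j\nabla u_i - u_i\nabla u_j)]$ and the right side again vanishes by antisymmetry of the summand under $i\leftrightarrow j$ (using $K_{ij}=K_{ji}$), so $S$ solves the heat equation with Neumann data; with $S(0,\cdot)\equiv 1$ we get $S\equiv 1$, which is the volume constraint \eqref{eq:volume2}. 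This reduces matters to controlling nonnegativity.

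First I would set up the fixed-point map: given $\bar u$ in a suitable ball of $X:=[L^2((0,T),H^2)\cap H^1((0,T),L^2)]^{n+1}$ with $\sum_i\bar u_i = 1$ and $\bar u_i\ge 0$, define $u=\Phi(\bar u)$ as the solution of the \emph{linear} parabolic system obtained by freezing the coefficients: $\partial_t u_i - K\Delta u_i = \mathrm{div}[\sum_{j\neq i}(K_{ij}-K)(\bar u_j\nabla u_i - \bar u_i\nabla u_j)]$ (or a variant where the full nonlinearity is frozen), with Neumann boundary conditions and initial datum $u^0$. Standard parabolic regularity (the right-hand side is a divergence of an $L^2((0,T),L^2)$ or better field once $d\le 3$ and $\bar u\in X\hookrightarrow L^\infty$ via the relevant Sobolev/interpolation embeddings) gives $u\in X$ with quantitative bounds. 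The self-map property on a small ball and contractivity will require Assumption~\ref{ass:A2}: the factor $|K_{ij}-K|\le\kappa$ multiplied by the number $2n$ of coupling terms (and their transposes) must be dominated by $K$, so that $K - 2n\kappa - \eps > 0$ gives a strictly positive effective diffusion after moving the cross terms; this is exactly where $K>2n\kappa$ enters, and it is essentially an energy estimate testing the linear system against $-\Delta u_i$ (or $u_i$ and $\partial_t u_i$) and absorbing the cross terms by Young's inequality.

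The genuinely delicate step is \textbf{nonnegativity}: the linear system for $u=\Phi(\bar u)$ is coupled and has no scalar maximum principle, so I cannot directly conclude $u_i\ge 0$. The plan here is to use the entropy/entropy-variable device as in \cite{Ehrlacher2017,Juengel2015bddness} at the level of the approximation — i.e. build the approximating sequence so that each iterate lies in $\overline{\mathcal P}$, for instance by running the fixed point in the entropy variables $w = Dh(U)$ (for which the range of $(Dh)^{-1}$ is automatically $\overline{\mathcal D}$), or by a truncation/regularization of $A$ combined with the constraint-preservation computation above; one then shows the entropy dissipation inequality survives the passage to the limit and that the limit $u$ satisfies $u(t,x)\in\overline{\mathcal P}$ a.e. Once existence of a solution in class (i)–(ii) is established, uniqueness follows from the weak-strong stability philosophy applied to two strong solutions: taking the difference, testing against it, and using $\sum_i(u_i-v_i)=0$ together with the smallness Assumption~\ref{ass:A2} to close a Gronwall estimate; alternatively, since the excerpt states a separate uniqueness lemma precedes this theorem ("provided a strong solution exists, it is unique"), I would simply invoke that lemma for uniqueness and concentrate the work on existence. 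The main obstacle, to reiterate, is reconciling the fixed-point/linearization machinery (which naturally lives in $H^2$-based spaces) with the entropy-method machinery (which naturally gives nonnegativity but weaker regularity) — bridging these, i.e. showing the entropy-constructed weak solution is in fact the strong solution produced by the contraction, is where the real care is needed, and Assumption~\ref{ass:A2} is the hypothesis that makes the strong a priori estimates robust enough for this.
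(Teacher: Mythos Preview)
Your overall architecture matches the paper: linearize around frozen data, apply a fixed point, recover $\sum_i u_i=1$ by the antisymmetry/heat-equation argument you describe, and invoke the separate uniqueness lemma. Two differences are worth flagging. First (minor), the paper's outer fixed point is Schauder, not a contraction: the map $\tilde u\mapsto u$ is shown to be continuous and self-mapping on a convex set carved out by the a priori bounds, with compactness in $L^2((0,T);L^2(\Omega))$ via Aubin--Lions. The contraction coming from $K>2n\kappa$ is used only to solve the \emph{inner} linear problem for fixed $\tilde u$. Second, and this is where you have a genuine gap, your primary proposal for nonnegativity --- running the fixed point in entropy variables $w=Dh(U)$ --- faces a real obstacle: $Dh$ is singular on $\partial\mathcal{D}$, so transferring $H^2$-regularity through this change of variables would require the solution to stay uniformly in the interior of $\mathcal{D}$, which is exactly what is unknown. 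Reconciling the entropy construction with the strong $H^2$ estimates is not just ``where the real care is needed''; it is not clear it can be done this way at all.

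The paper's mechanism is different and concrete. The frozen coefficients are truncated via $x^\diamond:=\max(0,\min(1,x))$, so the linearized problem $\partial_t u_i - K\Delta u_i = \sum_{j\neq i}(K_{ij}-K)(\tilde u_j^\diamond\Delta u_i - \tilde u_i^\diamond\Delta u_j)$ (in non-divergence form, which is what makes the test against $-\Delta u_i$ clean) is well-posed for \emph{any} $\tilde u\in W$; no constraint $\tilde u\in\overline{\mathcal{P}}$ is imposed on the iterates. After Schauder delivers a fixed point $u$ of the truncated system, each component satisfies a \emph{scalar} equation $\partial_t u_i - A_i(x,t)\Delta u_i = B_i(x,t)\,u_i^\diamond$, with $A_i\ge K-n\kappa>0$ and $B_i\in L^2((0,T);L^2(\Omega))$. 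The key is the pointwise identity $u_i^\diamond\,(u_i)_-\equiv 0$: testing with the negative part annihilates the source, decoupling the sign argument from the cross-diffusion entirely. Because $A_i$ is only $L^\infty$ in $x$, one mollifies $A_i\to A_i^\eps$ in space to legitimize the integration by parts with $(u_i^\eps)_-$, obtains $u_i^\eps\ge 0$ by Gronwall, and passes $\eps\to 0$ using uniqueness for the scalar linear problem. With $u_i\ge 0$ and $\sum_i u_i=1$ the truncation is inactive and $u$ solves the original system. This is presumably what you gesture at with ``truncation/regularization of $A$'', but the specific device --- the $u_i^\diamond$ appearing on the right-hand side and not only in the diffusion coefficient --- is the idea you are missing.
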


\begin{theorem}(Weak-strong stability estimate in $d=1$)\label{thm:stability}
Let us assume that Assumptions~\ref{ass:A1} and~\ref{ass:A2} hold. Let $\tilde u$ be a weak solution to~\eqref{eq:crossnplusone2} (or equivalently to \eqref{eq:crossnplusone}) in the sense of 
Proposition~\ref{prop:weak}, and let $u$ be a strong solution in the sense of 
Theorem~\ref{thm:existencestrong}.
Then, there exists a constant $C>0$ such that the following stability estimate holds for all $0< t\leq T$:
\begin{equation}\label{eq:estimate}
\|u(t,\cdot)-\tilde{u}(t,\cdot)\|_{L^2(\Omega)}^2 \leq e^{C\| \nabla u\|_{L^2(0,t; L^\infty(\Omega))}^2}\|u(0,\cdot)-\tilde{u}(0,\cdot)\|_{L^2(\Omega)}^2.
\end{equation}
\end{theorem}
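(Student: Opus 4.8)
The plan is to run a relative-entropy / $L^2$-stability argument adapted to the structure \eqref{eq:heatperturbation}, where the system is viewed as a perturbation of decoupled heat equations. First I would subtract the equations for $u$ and $\tilde u$, written component-wise in the form \eqref{eq:crossnplusone2} for all $n+1$ densities (using the symmetry $\sum_i u_i = \sum_i \tilde u_i = 1$), and test the difference equation for $u_i - \tilde u_i$ against $u_i - \tilde u_i$ itself. Summing over $i=0,\ldots,n$ and integrating by parts (the no-flux boundary terms vanish) gives, on the left, $\tfrac12\frac{d}{dt}\|u-\tilde u\|_{L^2(\Omega)}^2$, and on the right a sum of quadratic-in-gradient terms coming from the fluxes $\sum_{j\neq i}K_{ij}(u_j\nabla u_i - u_i\nabla u_j)$ versus $\sum_{j\neq i}K_{ij}(\tilde u_j\nabla \tilde u_i - \tilde u_i\nabla \tilde u_j)$. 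Since $\tilde u$ is only a weak solution, this testing should really be justified by using $u_i-\tilde u_i \in L^2(0,T;H^1)$ as a test function in the weak formulation of both equations, which is legitimate because the strong solution supplies the needed regularity on one side and Proposition~\ref{prop:weak} supplies $H^1$-in-space regularity on the other.

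Next I would split the flux difference into a "diagonal" (heat) part and an off-diagonal (cross-diffusion) part, mimicking \eqref{eq:heatperturbation}. Writing $K_{ij} = K + (K_{ij}-K)$ with $|K_{ij}-K|\le\kappa$, the leading part produces a clean coercive term $-K\sum_i \|\nabla(u_i-\tilde u_i)\|_{L^2}^2$ (in $d=1$ the decoupled heat parts are exactly $-K\int |\nabla(u_i-\tilde u_i)|^2$ after summation), while the remainder collects all terms weighted by $K_{ij}-K$. The remaining terms are products of the form $(K_{ij}-K)$ times bilinear expressions in $\{u_\ell,\tilde u_\ell\}$, their gradients, and $u_i-\tilde u_i$. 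I would rewrite each such product, adding and subtracting so that every term contains at least one factor of a density difference $u_\ell - \tilde u_\ell$; the "spare" gradient factor is then either $\nabla(u_\ell-\tilde u_\ell)$, to be absorbed into the coercive term via Young's inequality, or $\nabla u_\ell$ of the strong solution, which is bounded in $L^\infty(\Omega)$ and becomes the coefficient in the Grönwall factor. Here one exploits $0\le u_i,\tilde u_i\le 1$ (from $u,\tilde u \in \overline{\mathcal P}$ a.e.) to bound the density prefactors by $1$.

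The algebra is arranged so that, after Young's inequality, the total coefficient in front of $\sum_i\|\nabla(u_i-\tilde u_i)\|_{L^2}^2$ on the right is at most $2n\kappa + \eps$, which by Assumption~\ref{ass:A2} (in the form $K - 2n\kappa - \eps > 0$ noted in the excerpt) is strictly less than $K$; hence the gradient terms can be fully absorbed on the left with a strictly positive leftover $-(K-2n\kappa-\eps)\sum_i\|\nabla(u_i-\tilde u_i)\|^2 \le 0$ that is simply discarded. What survives is
\[
\tfrac12\tfrac{d}{dt}\|u-\tilde u\|_{L^2(\Omega)}^2 \;\le\; C\,\|\nabla u(t,\cdot)\|_{L^\infty(\Omega)}^2\,\|u(t,\cdot)-\tilde u(t,\cdot)\|_{L^2(\Omega)}^2,
\]
and Grönwall's lemma yields \eqref{eq:estimate}. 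The main obstacle — and the reason the result is stated only in $d=1$ — is precisely the coefficient bookkeeping in the previous step: one needs the sum of all off-diagonal contributions, after being put in the "difference times bounded gradient" form and balanced by Young, to come out with constant exactly $2n\kappa$ (not something larger) against the coercive $K$, and it is the one-dimensional structure (scalar gradients, the simple summation identity $\sum_i \nabla u_i = 0$, and the embedding/interpolation that makes $\|\nabla u\|_{L^\infty}$ the right quantity) that makes this sharp counting work. I would therefore carry out the estimate of the cross-terms carefully and symmetrically in $i$ and $j$, using $K_{ij}=K_{ji}$, so that pairs combine and no factor of $n$ is lost, and only then invoke Assumption~\ref{ass:A2}.
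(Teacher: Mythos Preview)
Your proposal is correct and follows essentially the same route as the paper: split off the heat part $K\Delta$, test the difference with $u-\tilde u$, bound the cross-terms (the paper packages them into a matrix $D(v)$ with $\|D(v)\|\le 2n\kappa$ and Lipschitz constant $2n\kappa$, which is exactly your component-wise bookkeeping), absorb the $\nabla(u-\tilde u)$ contribution using Assumption~\ref{ass:A2}, and Grönwall.

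One point to correct: you attribute the restriction to $d=1$ to the ``sharp counting'' of the $2n\kappa$ constant, suggesting that the one-dimensional structure (scalar gradients, $\sum_i\nabla u_i=0$) is what makes the coefficient come out right. This is not so. The bound $\|D(v)\|\le 2n\kappa$ and the Lipschitz estimate are pure linear algebra on the matrix entries and hold in any dimension; the entire energy argument up to the final differential inequality is dimension-independent. The only place $d=1$ enters is at the very end, to ensure that the strong solution actually satisfies $\nabla u\in L^2(0,T;L^\infty(\Omega))$ via the embedding $H^2(\Omega)\hookrightarrow W^{1,\infty}(\Omega)$, so that the Grönwall coefficient is finite. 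The paper makes this explicit in the remark following the proof.
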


A direct corollary of Theorem~\ref{thm:stability} is the weak-strong uniqueness of solutions to \eqref{eq:crossnplusone2} in dimension~1 for regular initial data, which can be stated as follows.
\begin{corollary}\label{cor:wsuniq}
Let us assume that $d=1$ and let $u^0 \in H^1(\Omega)^{n+1}$ such that $u^0(x)\in\overline{\mathcal P}$ for almost all $x\in \Omega$.
Let $\tilde u$ be a weak solution to \eqref{eq:crossnplusone2} (or equivalently to \eqref{eq:crossnplusone}) in the sense of Proposition \ref{prop:weak} and $u\in L^2((0,T), H^2(\Omega)) \cap H^1((0,T), L^2(\Omega))$ 
be a strong solution in the sense of Theorem \ref{thm:existencestrong}.\\ 
Then, if the corresponding initial data $u(0, \cdot)$ and $\tilde{u}(0,\cdot)$ agree a.e. on $\Omega$, we also have
\begin{align*}
u = \tilde u \text{ a.e. in } \Omega \times (0,T).
\end{align*}
\end{corollary}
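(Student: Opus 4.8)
The plan is to obtain Corollary~\ref{cor:wsuniq} as an immediate consequence of the stability estimate~\eqref{eq:estimate} of Theorem~\ref{thm:stability}. The hypotheses of the corollary are precisely those of Theorem~\ref{thm:stability} specialized to $d=1$ (Assumptions~\ref{ass:A1} and~\ref{ass:A2} are in force, $\tilde u$ is a weak solution in the sense of Proposition~\ref{prop:weak}, and $u$ is a strong solution in the sense of Theorem~\ref{thm:existencestrong}), supplemented by the extra hypothesis that the two initial data coincide, $u(0,\cdot)=\tilde u(0,\cdot)$ a.e.\ on $\Omega$. In particular $\|u(0,\cdot)-\tilde u(0,\cdot)\|_{L^2(\Omega)}^2=0$.

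Next I would check that the exponential prefactor $e^{C\|\nabla u\|_{L^2(0,t;L^\infty(\Omega))}^2}$ on the right-hand side of~\eqref{eq:estimate} is a finite constant. Since $d=1$ and, by Theorem~\ref{thm:existencestrong}, $u\in [L^2((0,T),H^2(\Omega))]^{n+1}$, the one-dimensional Sobolev embedding $H^2(\Omega)\hookrightarrow W^{1,\infty}(\Omega)$ gives $\nabla u\in [L^2((0,T),L^\infty(\Omega))]^{n+1}$, so that $\|\nabla u\|_{L^2(0,t;L^\infty(\Omega))}<\infty$ for every $t\le T$. Hence the prefactor is finite.

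Combining the two observations, \eqref{eq:estimate} yields $\|u(t,\cdot)-\tilde u(t,\cdot)\|_{L^2(\Omega)}^2\le 0$ for every $0<t\le T$, and therefore $u(t,\cdot)=\tilde u(t,\cdot)$ in $L^2(\Omega)$ for a.e.\ $t\in(0,T)$, i.e.\ $u=\tilde u$ a.e.\ in $\Omega\times(0,T)$. I do not expect any genuine obstacle here: the entire content of the statement is carried by Theorem~\ref{thm:stability}, and the only point requiring a line of justification is that the right-hand side of~\eqref{eq:estimate} is well defined, which reduces to the $d=1$ Sobolev embedding together with the $L^2((0,T),H^2(\Omega))$ regularity of the strong solution already provided by Theorem~\ref{thm:existencestrong}.
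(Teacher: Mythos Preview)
Your proposal is correct and matches the paper's own treatment: the corollary is stated as a direct consequence of Theorem~\ref{thm:stability}, and the only point the paper singles out is precisely the one you justify, namely that in $d=1$ the embedding $H^2(\Omega)\hookrightarrow W^{1,\infty}(\Omega)$ ensures $\nabla u\in L^2(0,T;L^\infty(\Omega))$ so that the exponential factor in~\eqref{eq:estimate} is finite.
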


\REVV{For the proof of Theorem~\ref{thm:stability} and Corollary~\ref{cor:wsuniq}}, we are restricted to one spatial dimension due to the fact that we need the embedding $H^2(\Omega) \hookrightarrow W^{1,\infty}(\Omega)$, 
so that a strong solution $u \in \left[L^2((0,T), H^2(\Omega)) \cap H^1((0,T), L^2(\Omega))\right]^{n+1}$ in the sense of Theorem~\ref{thm:existencestrong}, satisfies $\nabla u \in \left[L^2(0,T; L^\infty(\Omega))^d\right]^{n+1}$.

\section{Proof of Theorem~\ref{thm:existencestrong}}\label{sec:proofex}
\selectlanguage{english}
The aim of this section is to prove Theorem~\ref{thm:existencestrong} which states the existence \REVV{and uniqueness} of strong solutions to system~(\ref{eq:crossnplusone2}).

The proof is based on a fixed-point argument: we will show existence and uniqueness of strong solutions to a linearized system and subsequently apply Brouwer's fixed point theorem. 
\REV{Uniqueness will be shown seperately}. For convenience let us define
\begin{align}\label{eq:def_W}
W:= L^2((0,T);H^2(\Omega)) \cap H^1((0,T);L^2(\Omega)),
\end{align}
\REVV{
and 
$$
W_n:= \left\{ u \in W, \; \nabla u \cdot \textbf{n} = 0 \mbox{ a.e. on } (0,T)\times \partial \Omega\right\}. 
$$
It can be easily checked that $W_n$ is a Banach space when endowed with the norm defined by
$$
\forall u\in W_n, \quad \|u\|^2_{W_n}:= \|\partial_t u \|^2_{L^2(0,T;L^2(\Omega))} + \|\Delta u \|^2_{L^2(0,T; L^2(\Omega))} + \|u(t=0,\cdot)\|^2_{H^1(\Omega)}.
$$
}

\normalfont

\subsection{Auxiliary \REV{lemmata}}

In this section, we start by stating some auxiliary results which are needed in the sequel.

\begin{lemma}\label{lem:timederiv} 
Let $u \in W$. Then, it holds that 
\begin{itemize}
 \item[(i)] $u \in \cC([0,T];H^1(\Omega))$;
 \item[(ii)] there exists a constant $C>0$ which only depends on $T$ and $\Omega$ such that
\begin{equation}\label{eq:est1}
 \mathop{\max}_{0\leq t \leq T} \|u(t,\cdot)\|_{H^1(\Omega)} \leq C\left( \|u\|_{L^2(0,T; H^2(\Omega))} + \|\partial_t u\|_{L^2(0,T; L^2(\Omega)}\right);
\end{equation}
\item[(iii)] if in addition \REVV{$u\in W_n$} (i.e. if $\nabla u \cdot \textbf{n} = 0$ a.e. in $\partial \Omega \times (0,T)$), the mapping $(0,T)\ni t \mapsto \|\nabla u(t, \cdot)\|_{L^2(\Omega)}^2$ is absolutely continuous, with 
$$
\frac{d}{dt}\|\nabla u(t, \cdot)\|_{L^2(\Omega)}^2 = -2 \langle \partial_t u(t,\cdot), \Delta u(t,\cdot) \rangle_{L^2(\Omega)}, \quad \mbox{ for a.e. }t \in (0,T).
$$

\normalfont
\end{itemize}
\end{lemma}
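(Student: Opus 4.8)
The plan is to prove the three parts of Lemma~\ref{lem:timederiv} by a standard density/interpolation argument, reducing everything to smooth functions and then passing to the limit.

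\medskip

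\textbf{Part (i) and (ii).} First I would recall the classical Lions--Magenes / Aubin--Lions type embedding: if $u\in L^2(0,T;H^2(\Omega))$ with $\partial_t u\in L^2(0,T;L^2(\Omega))$, then $u$ agrees a.e. with a function in $\mathcal{C}([0,T];H^1(\Omega))$. The cleanest route is interpolation: $H^1(\Omega)=[H^2(\Omega),L^2(\Omega)]_{1/2}$, so the abstract theorem on the space $\{u\in L^2(0,T;X_0):\partial_t u\in L^2(0,T;X_1)\}\hookrightarrow \mathcal{C}([0,T];[X_0,X_1]_{1/2})$ with $X_0=H^2(\Omega)$, $X_1=L^2(\Omega)$ applies directly and gives both the continuity (i) and the estimate (ii), since such embeddings come with a norm bound. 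Alternatively one can argue by hand: for $u$ smooth in time,
$$
\|u(t,\cdot)\|_{H^1(\Omega)}^2 = \|u(s,\cdot)\|_{H^1(\Omega)}^2 + \int_s^t \frac{d}{d\tau}\|u(\tau,\cdot)\|_{H^1(\Omega)}^2\,d\tau,
$$
bound $\frac{d}{d\tau}\|u\|_{H^1}^2 = 2\langle \partial_\tau u, u\rangle_{H^1}$ using $\|u\|_{H^1}\le C\|u\|_{H^2}^{1/2}\|u\|_{L^2}^{1/2}$ (interpolation again), integrate in $s$ over $(0,T)$ and take the supremum over $t$; density of smooth functions finishes it. I would present the interpolation version as the main argument and mention that the constant depends only on $T$ and $\Omega$.

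\medskip

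\textbf{Part (iii).} Here the extra hypothesis $u\in W_n$ is used precisely to integrate by parts without boundary terms. For $u$ smooth (in both variables) with $\nabla u\cdot\textbf{n}=0$ on $(0,T)\times\partial\Omega$, we have pointwise in $t$
$$
\frac{d}{dt}\|\nabla u(t,\cdot)\|_{L^2(\Omega)}^2 = 2\int_\Omega \nabla u\cdot\nabla\partial_t u\,dx = -2\int_\Omega \partial_t u\,\Delta u\,dx,
$$
the last equality by Green's formula, the boundary integral vanishing because $\partial_t u$ is a legitimate test function and $\nabla u\cdot\textbf{n}=0$. The right-hand side is in $L^1(0,T)$ since both $\partial_t u$ and $\Delta u$ are in $L^2(0,T;L^2(\Omega))$ by hypothesis; hence $t\mapsto\|\nabla u(t,\cdot)\|_{L^2}^2$, being the integral of an $L^1$ function plus a constant, is absolutely continuous with the stated derivative. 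To pass from smooth $u$ to general $u\in W_n$, I would approximate $u$ by a sequence $u_k$ smooth and satisfying the no-flux condition, with $u_k\to u$ in $W_n$ (using that $W_n$ is a Banach space and that such approximations exist — e.g. by mollification in time of a spatially smooth approximation, or by the density of smooth no-flux functions, which should be recalled or cited). Then $\|\nabla u_k\|_{L^2}^2\to\|\nabla u\|_{L^2}^2$ uniformly in $t$ by part (ii) applied to $u_k-u$, while $\langle\partial_t u_k,\Delta u_k\rangle_{L^2}\to\langle\partial_t u,\Delta u\rangle_{L^2}$ in $L^1(0,T)$; passing to the limit in the integral identity $\|\nabla u_k(t)\|^2 = \|\nabla u_k(0)\|^2 - 2\int_0^t\langle\partial_t u_k,\Delta u_k\rangle\,d\tau$ yields the same identity for $u$, and absolute continuity follows.

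\medskip

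\textbf{Main obstacle.} The only genuinely delicate point is the density step in (iii): one must approximate an arbitrary $u\in W_n$ by smooth functions that still satisfy the homogeneous Neumann condition, in the $W_n$-norm. Mollifying in time preserves the boundary condition and converges in $W_n$, so it suffices to approximate a fixed time-slice in $H^2(\Omega)$ with no-flux boundary data; this uses regularity of $\partial\Omega$ and is where the "bounded regular domain" assumption enters. Everything else is routine interpolation and integration by parts.
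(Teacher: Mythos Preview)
Your approach is correct and matches the paper's: items (i)--(ii) are obtained there by directly citing the Lions--Magenes embedding (Evans, Theorem~4, Section~5.9.2), and (iii) is proved by time mollification followed by passage to the limit, exactly as you outline. One simplification over your sketch: the paper mollifies \emph{only} in the time variable, producing $u^\delta\in C^\infty((0,T);H^2(\Omega))$ with $\nabla u^\delta\cdot\textbf{n}=0$ automatically preserved; since Green's formula requires only $H^2$ spatial regularity to kill the boundary term, no spatial approximation is needed and the ``main obstacle'' you flag (density of smooth no-flux functions in $H^2(\Omega)$) never arises.
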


\begin{proof}
Items (i) and (ii) are direct applications of [Theorem~4, Section~5.9.2] of~\cite{Evans}. Let us now turn to the proof of (iii). 
We extend $u$ by zero to a function $\overline{u}$ defined for all $t \in \RR$ and define, for all $\delta >0$ and $t\in \R$, $u^\delta(t,\cdot) := \int_\R \eta_\delta(t-s)\overline{u}(s,\cdot)\,ds$, 
where $\eta_\delta$ is a standard mollifier. It then holds that $u^\delta \in \cC^\infty(0,T; H^2(\Omega))$ and that for all $t\in (0,T)$, $\nabla u^\delta(t, \cdot) \cdot \textbf{n} = 0$ a.e. in $\partial \Omega$.

Let $0< t <T$. Then, we have
\begin{align*}
\frac{d}{dt} \left( \|\nabla u^\delta(t,\cdot)\|_{L^2(\Omega)}^2\right) = 2\langle\partial_t \nabla u^\delta(t,\cdot), \nabla u^\delta(t, \cdot)\rangle_{L^2(\Omega)} = 2\int_\Omega \partial_t\nabla u^\delta(t,\cdot) \cdot \nabla u^\delta(t,\cdot),
\end{align*}
where $\partial_t \nabla u^\delta$ is the weak time derivative of $\nabla u^\delta$. For all $0\leq t \leq T$, the following convergence holds strongly in $L^2(\Omega)$
$$
\mathop{\lim}_{h\to 0 }\frac{\nabla u^\delta (t+h,\cdot) - \nabla u^\delta(t,\cdot)}{h} = \partial_t \nabla u^\delta(t,\cdot), 
$$
and
\begin{equation}\label{eq:boundunif}
\left\|\frac{\nabla u^\delta (t+h, \cdot) - \nabla u^\delta(t, \cdot)}{h}\right\|_{L^2(\Omega)} = \left\|\frac{1}{h}\int_{t}^{t+h} \partial_s \nabla u^\delta(s, \cdot)\,ds\right\|_{L^2(\Omega)} \leq \mathop{\sup}_{0 \leq s \leq T} \|\partial_s \nabla u^{\delta}(s, \cdot)\|_{L^2(\Omega)}.
\end{equation}
Inequality (\ref{eq:boundunif}) implies that the difference quotient $\displaystyle \left( \frac{\nabla u^\delta (\cdot,t+h) - \nabla u^\delta(\cdot,t)}{h}\right)_{h>0}$ is uniformly bounded in $L^2(\Omega)$ as $h$ goes to $0$, so that 
$\displaystyle \left( \frac{\nabla u^\delta (\cdot,t+h) - \nabla u^\delta(\cdot,t)}{h}\cdot \nabla u^\delta \right)_{h>0}$ is uniformly bounded in $L^1(\Omega)$. As a consequence, applying Lebesgue's dominated convergence theorem, we obtain
\begin{align*}
 2\int_\Omega \partial_t\nabla u^\delta(t,\cdot) \cdot \nabla u^\delta(t,\cdot) & = \mathop{\lim}_{h\to 0} \int_\Omega  \frac{\nabla u^\delta (\cdot,t+h) - \nabla u^\delta(\cdot,t)}{h}\cdot \nabla u^\delta(\cdot, t).
\end{align*}
Besides, since $\nabla u^\delta(s, \cdot) \cdot \textbf{n} = 0$ on $\partial \Omega$ for all $s \in (0,T)$, it holds that for all $h>0$, 
$$
\int_\Omega  \frac{\nabla u^\delta (\cdot,t+h) - \nabla u^\delta(\cdot,t)}{h}\cdot \nabla u^\delta = - \int_\Omega \frac{u^\delta (\cdot,t+h) - u^\delta(\cdot,t)}{h} \Delta u^\delta(\cdot, t).
$$
Applying again Lebesgue's convergence theorem, we obtain
$$
\mathop{\lim}_{h\to 0} \int_\Omega \frac{u^\delta (\cdot,t+h) - u^\delta(\cdot,t)}{h} \Delta u^\delta(\cdot, t) = \int_\Omega \partial_t u^\delta (\cdot,t) \Delta u^\delta(\cdot, t), 
$$
in $L^1(\Omega)$. Thus, for all $\delta >0$, we have
\begin{equation}\label{eq:delta}
\frac{d}{dt} \left( \|\nabla u^\delta(t,\cdot)\|_{L^2(\Omega)}^2\right) = - 2\langle \partial_t u^\delta(t,\cdot), \Delta u^\delta(t,\cdot)\rangle_{L^2(\Omega)}.  
\end{equation}
As $\delta$ goes to $0$, the convergences $u^\delta \to u,\, \nabla u^\delta \to \nabla u$ and $\Delta u^\delta \to \Delta u$ hold strongly in $L^2(0,T; L^2(\Omega))$ (since $u \in L^2((0,T);H^2(\Omega))$.
We finally obtain the result by passing to the limit $\delta \to 0$ in (\ref{eq:delta}).
\end{proof}

\REVV{
Let us now introduce the space
$$
W_0 := \{ u \in W\; | \; \nabla u \cdot \textbf{n} = 0 \text{ a.e. on } \partial\Omega \times (0,T),\, u=0 \text{ a.e. on } \Omega \times \{t=0\} \},
$$
which is a closed subspace of $W_n$. For all $u \in W_0$ and all $0\leq i \leq n$, let us define
$$
\| u \|_{W_0}^2 :=\int_0^T \| \partial_t u(\cdot,t) - K \Delta u(\cdot,t)\|_{L^2(\Omega)}^2\;dt
$$
and 
$$
\| u \|_{\widetilde W_0}^2 :=\int_0^T \|\partial_t u(\cdot,t)^2\|_{L^2(\Omega)}^2 + K^2 \|\Delta u(\cdot,t)\|_{L^2(\Omega)}^2\;dt.
$$
}

We then have the following result (see also \cite{Breden2017} for a similar argument).
\begin{lemma}[Equivalence of norms]\label{lem:equi}
The two applications $\|\cdot\|_{W_0}$ and $\|\cdot\|_{\widetilde W_0}$ define norms on $W_0$ which are equivalent, and equivalent to the norm $\|\cdot\|_{W_n}$.
\end{lemma}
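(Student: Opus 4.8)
The plan is to show the chain of estimates $\|u\|_{\widetilde W_0} \lesssim \|u\|_{W_n} \lesssim \|u\|_{\widetilde W_0}$ together with $\|u\|_{W_0} \lesssim \|u\|_{\widetilde W_0} \lesssim \|u\|_{W_0}$; combining these gives that all three are equivalent norms on $W_0$. The only non-obvious direction is controlling $\|\Delta u\|_{L^2(0,T;L^2(\Omega))}$ and $\|\partial_t u\|_{L^2(0,T;L^2(\Omega))}$ by $\|\partial_t u - K\Delta u\|_{L^2(0,T;L^2(\Omega))}$, i.e.\ maximal parabolic regularity for the heat operator with Neumann boundary conditions on the space $W_0$ of functions vanishing at $t=0$.

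First I would record the trivial inequalities. For any $u\in W_0$ one has $\|u\|_{W_0}^2 = \int_0^T \|\partial_t u - K\Delta u\|_{L^2(\Omega)}^2\,dt \le 2\|u\|_{\widetilde W_0}^2$ by the elementary bound $(a-b)^2\le 2a^2+2b^2$. Likewise $\|u\|_{\widetilde W_0}^2 \le C\|u\|_{W_n}^2$ is immediate from the definition of $\|\cdot\|_{W_n}$ (the extra term $\|u(t=0,\cdot)\|_{H^1(\Omega)}^2$ only helps, and it vanishes anyway on $W_0$). It remains to prove $\|u\|_{W_n}^2 \le C\|u\|_{W_0}^2$, since this simultaneously closes both cycles: it shows $\|\cdot\|_{W_n}$ (hence also $\|\cdot\|_{\widetilde W_0}$, which is dominated by it) is controlled by $\|\cdot\|_{W_0}$, and we already have $\|\cdot\|_{W_0}\le \sqrt2\,\|\cdot\|_{\widetilde W_0}\le C\|\cdot\|_{W_n}$ in the other direction.

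The heart of the matter is therefore the following: given $f := \partial_t u - K\Delta u \in L^2(0,T;L^2(\Omega))$ and $u(0,\cdot)=0$, recover $\|\partial_t u\|_{L^2(0,T;L^2(\Omega))} + \|\Delta u\|_{L^2(0,T;L^2(\Omega))} \le C\|f\|_{L^2(0,T;L^2(\Omega))}$. This is classical maximal $L^2$-regularity for the Laplacian with homogeneous Neumann conditions on a bounded regular domain $\Omega$; I would either cite it directly (e.g.\ via the fact that $-K\Delta$ with Neumann boundary condition generates an analytic semigroup on $L^2(\Omega)$ and is self-adjoint, so Hilbert-space maximal regularity applies), or give the short self-contained argument: multiply $\partial_t u - K\Delta u = f$ by $-\Delta u$, integrate over $\Omega$, use the identity from Lemma~\ref{lem:timederiv}(iii) that $\int_\Omega \partial_t u\,(-\Delta u) = \tfrac12\tfrac{d}{dt}\|\nabla u\|_{L^2(\Omega)}^2$ (legitimate since $u\in W_n$), integrate in time using $\nabla u(0,\cdot)=0$, and apply Young's inequality to absorb $\tfrac{K}{2}\|\Delta u\|_{L^2}^2$ and bound $\|\nabla u(T,\cdot)\|_{L^2}^2$ as well; this yields $\|\Delta u\|_{L^2(0,T;L^2(\Omega))}\le \tfrac{1}{K}\|f\|_{L^2(0,T;L^2(\Omega))}$, and then $\|\partial_t u\|_{L^2(0,T;L^2(\Omega))}\le \|f\|_{L^2(0,T;L^2(\Omega))} + K\|\Delta u\|_{L^2(0,T;L^2(\Omega))} \le 2\|f\|_{L^2(0,T;L^2(\Omega))}$. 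Finally $\|u(t=0,\cdot)\|_{H^1(\Omega)}=0$ on $W_0$, so all three terms in $\|u\|_{W_n}^2$ are controlled, completing the equivalence. The step I expect to be the main obstacle (or at least the one requiring care) is justifying the integration-by-parts identity and the time-integration at the $W_0$-regularity level rather than for smooth functions — but this is precisely what Lemma~\ref{lem:timederiv}(iii) is designed to supply, so in practice it is routine. That $\|\cdot\|_{W_0}$ is a genuine norm (not just a seminorm) on $W_0$ then follows a posteriori from its equivalence with $\|\cdot\|_{W_n}$, which is clearly positive definite.
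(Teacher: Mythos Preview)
Your proposal is correct and follows essentially the same approach as the paper: both arguments hinge on the identity $\int_0^T\!\int_\Omega \partial_t u\,\Delta u = -\tfrac12\|\nabla u(T)\|_{L^2(\Omega)}^2 \le 0$ (justified via Lemma~\ref{lem:timederiv}(iii)) to control $\|\partial_t u\|$ and $\|\Delta u\|$ separately by $\|\partial_t u - K\Delta u\|$. The paper's presentation is marginally slicker in that it simply expands the square $\|u\|_{W_0}^2 = \|\partial_t u\|^2 - 2K\int\!\!\int \partial_t u\,\Delta u + K^2\|\Delta u\|^2 \ge \|u\|_{\widetilde W_0}^2$ directly, obtaining the sharp constant~$1$ without passing through Young's inequality, but the content is the same.
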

\begin{proof}
Let $u\in W_0$. On the one hand, 
\begin{align} \nonumber
\|u \|_{W_0}^2 & = \int_0^T\int_\Omega (\partial_t u - K \Delta u)^2 = \int_0^T\int_\Omega (\partial_t u)^2 -2K\partial_t u \Delta u + K^2( \Delta u)^2\\ \label{eq:normestimate}
&\ge \int_0^T\int_\Omega (\partial_t u)^2 + K^2( \Delta u)^2 = \|u\|_{\widetilde W_0}^2\\\nonumber
\end{align}
since
\begin{align*}
\int_0^T\int_\Omega 2K\partial_t u \Delta u = - \int_0^T\int_\Omega 2K\partial_t \nabla u \cdot \nabla u = - K \int_\Omega |\nabla u|^2(T) \le 0.
\end{align*}
On the other hand, we always have 
\begin{equation}
\|u \|_{W_0}^2 = \int_0^T\int_\Omega (\partial_t u - K \Delta u)^2 \le 2\int_0^T\int_\Omega (\partial_t u)^2 + K^2( \Delta u)^2= 2 \|u\|_{\widetilde W_0}.
\end{equation}
Hence the equivalence of the norms. The fact that the norm $\|\cdot\|_{\widetilde W_0}$ is equivalent to the norm $\|\cdot\|_{W_n}$ on $W_0$ is obvious and yields the desired result.
\end{proof}
We will make use of these norms in the proof of Lemma~\ref{lem:linear}. Lastly, we introduce the following Lemma which is used in the proof of Lemma~\ref{lem:strong_uniqueness}.
\begin{lemma}\label{lem:reg}
Let us assume that $d\leq 3$. For all $\gamma >0$, there exists a constant $C_\gamma >0$ such that 
\begin{equation}\label{eq:ineqimport}
\forall v \in H^2(\Omega), \quad \|v\|^2_{L^\infty(\Omega)} \leq \gamma \|\Delta v \|^2_{L^2(\Omega)} + C_\gamma\left( \|v\|_{L^2(\Omega)}^2 + \|\nabla v \|_{L^2(\Omega)}^2\right).
\end{equation}
\end{lemma}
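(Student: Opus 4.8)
The plan is to bound $\|v\|_{L^\infty}$ by $\|v\|_{H^2}$ via an interpolation inequality, absorb the resulting second-order term using Young's inequality, and finally replace $\|v\|_{H^2}$ by $\|\Delta v\|_{L^2}$ (together with lower-order terms) by elliptic regularity.

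First, since $d\le 3$ one has the embedding $H^2(\Omega)\hookrightarrow L^\infty(\Omega)$; more precisely, the Gagliardo--Nirenberg interpolation inequality on the bounded regular domain $\Omega$ provides an exponent $\theta\in(0,1)$ depending only on $d$ and a constant $C>0$ such that $\|v\|_{L^\infty(\Omega)}\le C\,\|v\|_{H^2(\Omega)}^{\theta}\,\|v\|_{H^1(\Omega)}^{1-\theta}$ for all $v\in H^2(\Omega)$. Squaring this and invoking Young's inequality with the conjugate exponents $1/\theta$ and $1/(1-\theta)$, I obtain: for every $\delta>0$ there exists $C_\delta>0$ with
\[
\|v\|_{L^\infty(\Omega)}^2\le \delta\,\|v\|_{H^2(\Omega)}^2+C_\delta\,\|v\|_{H^1(\Omega)}^2 \qquad \text{for all } v\in H^2(\Omega).
\]

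It then remains to estimate $\|v\|_{H^2(\Omega)}^2$ by $\|\Delta v\|_{L^2(\Omega)}^2+\|v\|_{H^1(\Omega)}^2$. This is the standard $H^2$ a priori estimate for the Laplacian on the smooth bounded domain $\Omega$: there is $C_\Omega>0$ such that $\|v\|_{H^2(\Omega)}^2\le C_\Omega\big(\|\Delta v\|_{L^2(\Omega)}^2+\|v\|_{H^1(\Omega)}^2\big)$, valid for $v\in H^2(\Omega)$ compatible with the homogeneous Neumann condition $\nabla v\cdot\mathbf{n}=0$ on $\partial\Omega$ — which is exactly the class in which Lemma~\ref{lem:reg} is used in the proof of Lemma~\ref{lem:strong_uniqueness}, where $v$ is a difference of two strong solutions. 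Inserting this into the previous display gives
\[
\|v\|_{L^\infty(\Omega)}^2\le \delta C_\Omega\,\|\Delta v\|_{L^2(\Omega)}^2+\big(C_\delta+\delta C_\Omega\big)\,\|v\|_{H^1(\Omega)}^2,
\]
so for a prescribed $\gamma>0$ one takes $\delta:=\gamma/C_\Omega$ and sets $C_\gamma:=C_{\gamma/C_\Omega}+\gamma$, which yields \eqref{eq:ineqimport}.

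The interpolation inequality and the Young splitting are routine. The step deserving care is the elliptic regularity estimate, since that is where the smoothness of $\partial\Omega$ and the compatibility of $v$ with the no-flux boundary condition come in: some control at the boundary is genuinely necessary, as one sees by testing the inequality on harmonic functions (for which $\Delta v=0$) whose boundary traces concentrate near a point.
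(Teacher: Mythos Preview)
Your argument is sound for functions satisfying the homogeneous Neumann condition, which is the only class in which the lemma is actually used. The route differs from the paper's: you interpolate $L^\infty$ between $H^2$ and $H^1$ and then trade $\|v\|_{H^2}$ for $\|\Delta v\|_{L^2}$ via elliptic regularity, whereas the paper first embeds $L^\infty\hookrightarrow W^{1,3+1/4}$ and then applies a Gagliardo--Nirenberg inequality bounding $\|\nabla v\|_{L^{3+1/4}}$ directly by $\|\Delta v\|_{L^2}^{1/2}\|v\|_{L^{13/3}}^{1/2}$ plus a lower-order term. Your version is arguably more transparent and makes explicit where the boundary condition enters; the paper's use of $\|\Delta v\|_{L^2}$ in place of the full second-order seminorm $\|D^2 v\|_{L^2}$ in the Gagliardo--Nirenberg step hides the same requirement, since on a bounded domain these two quantities are comparable only modulo boundary compatibility.

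Your closing remark about harmonic functions is on point and in fact shows that the inequality as stated for \emph{all} $v\in H^2(\Omega)$ cannot hold in $d\ge 2$: on the unit disk the finite sums $v_N=\sum_{n=2}^N (n\log n)^{-1}r^n\cos(n\theta)$ are smooth, harmonic, uniformly bounded in $H^1$, yet $\|v_N\|_{L^\infty}\to\infty$. So some boundary condition is genuinely needed, and you have proved exactly what Lemma~\ref{lem:strong_uniqueness} requires.
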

\begin{proof}[Proof of Lemma~\ref{lem:reg}]
To prove (\ref{eq:ineqimport}), we use the continuity of the embeddings $H^1(\Omega) \hookrightarrow  L^{p}(\Omega)$ for all $p\leq 6$ and $W^{1,3 + \delta}(\Omega) \hookrightarrow L^\infty(\Omega)$ for all $\delta >0$. 
Thus, for all $v\in H^2(\Omega)$, it holds that $v \in  W^{1,3 + 1/4}(\Omega)$. Throughout the proof, $C$ will denote a positive constant, which is independent of $v$, and may change along computatations.
It holds that
\begin{align*}
\|v\|_{L^{\infty}(\Omega)}^2 & \leq C \|v\|_{W^{1,3 + 1/4}(\Omega)}^2\\
&= C \left( \|v\|_{L^{3 + 1/4}(\Omega)} + \|\nabla v\|_{L^{3 + 1/4}(\Omega)}\right)^2\\
& \leq C \left( \|v\|_{H^1(\Omega)} +  \|\nabla v\|_{L^{3+1/4}(\Omega)}\right)^2.\\
\end{align*} 
To estimate the term $\|\nabla v\|_{L^{3 +1/4}(\Omega)}$, we now use the Gagliardo-Nirenberg-Sobolev inequality \cite[Thm 13.54]{Leoni2009} to obtain that
$$
\|\nabla v\|_{L^{3 +1/4}(\Omega)} \leq C \left( \|\Delta v\|_{L^2(\Omega)}^{1/2} \|v\|^{1/2}_{L^{13/3}(\Omega)}  + \|v\|_{L^{13/3}(\Omega)}\right) 
\leq C \left( \|\Delta v\|_{L^2(\Omega)}^{1/2} \|v\|^{1/2}_{H^1(\Omega)} + \|v\|_{H^1(\Omega)}\right).
$$
Thus, we obtain that for all $\gamma >0$, 
\begin{align*}\label{eq:w13estimate}
\|v\|_{L^\infty(\Omega)}^2 &\le C\left( \|v\|_{H^1(\Omega)}+ \|v\|_{H^1}^{1/2}\|\Delta v \|_{L^2(\Omega)}^{1/2}) \right)^2\\
&\le C\left( \|v\|_{H^1(\Omega)}+ \frac{C}{2\gamma}\|v\|_{H^1} + \frac{\gamma}{2C}\|\Delta v \|_{L^2(\Omega)} \right)^2. \\
&\le C_\gamma (\|\nabla v\|_{L^2(\Omega)}^2+\|v\|_{L^2(\Omega)}^2)+ \gamma\|\Delta v \|_{L^2(\Omega)}^2,\\
\end{align*}
where $C_\gamma>0$ is a positive constant which depends on $\gamma$ but is independent of $v$. Hence the desired result.
\end{proof}

\REV{\subsection{Uniqueness of strong solutions}}

To \REVTWO{improve} readability, we first show the uniqueness of strong solutions. 
\begin{lemma}[Uniqueness of strong solutions]\label{lem:strong_uniqueness}
	Let \REVV{$d\leq 3$ and let }us assume that Assumptions~\ref{ass:A1} and~\ref{ass:A2} hold. Let $u^0\in [H^1(\Omega)]^{n+1}$, with $u^0(x) \in\overline{\mathcal{P}}$ for almost all $x\in\Omega$. 
	If there exists at least one strong solution $u$ to \eqref{eq:crossnplusone2} (or equivalently to \eqref{eq:crossnplusone}) such that
	\begin{itemize}
		\item [(i)] $\displaystyle u \in [L^2((0,T), H^2(\Omega)) \cap H^1((0,T), L^2(\Omega))]^{n+1}$,
		\item[(ii)] $u(t,x)\in \overline{\cP}$ for almost all $(t,x)\in (0,T)\times \Omega$,
	\end{itemize}
	then it is unique.
\end{lemma}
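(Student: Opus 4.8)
The plan is to argue by a Grönwall-type energy estimate on the difference of two strong solutions. Suppose $u$ and $\tilde u$ are two strong solutions with the same initial datum $u^0$, and set $w_i := u_i - \tilde u_i$ for $0\le i\le n$, so that $w_i \in W$ with $w_i(0,\cdot)=0$ and $\nabla w_i\cdot\textbf{n}=0$ on $(0,T)\times\partial\Omega$. Subtracting the two copies of \eqref{eq:crossnplusone2}, writing each flux via the heat-equation splitting \eqref{eq:heatperturbation}, and testing the $i$-th equation against $w_i$ in $L^2(\Omega)$ (which is legitimate since $\partial_t w_i\in L^2(L^2)$ and $w_i\in L^2(H^2)$), I would obtain
\begin{align*}
\frac12\frac{d}{dt}\sum_{i=0}^n\|w_i(t,\cdot)\|_{L^2(\Omega)}^2 + K\sum_{i=0}^n\|\nabla w_i(t,\cdot)\|_{L^2(\Omega)}^2
= \sum_{i=0}^n\int_\Omega \Big[\sum_{j=0}^n (K_{ij}-K)\big(u_j\nabla u_i - u_i\nabla u_j - \tilde u_j\nabla\tilde u_i + \tilde u_i\nabla\tilde u_j\big)\Big]\cdot\nabla w_i\,dx,
\end{align*}
after integrating the divergence terms by parts and using the no-flux boundary conditions.

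The next step is to bound the right-hand side. Each quadratic difference such as $u_j\nabla u_i - \tilde u_j\nabla\tilde u_i$ splits as $w_j\nabla u_i + \tilde u_j\nabla w_i$; since all of $u_i,\tilde u_i\in[0,1]$ by \eqref{eq:volume} (property (ii)), the factors multiplying gradients are bounded by $1$. Using $|K_{ij}-K|\le\kappa$, Young's inequality, and the fact that there are at most $2n$ nontrivial terms in the double sum for each $i$, one controls the right-hand side by
\begin{align*}
2n\kappa\sum_{i=0}^n\|\nabla w_i\|_{L^2(\Omega)}^2 + 2n\kappa\sum_{i=0}^n\int_\Omega |w|\,|\nabla u|\,|\nabla w_i|\,dx,
\end{align*}
roughly speaking, where the first block is absorbed into the dissipation term $K\sum\|\nabla w_i\|^2$ thanks to Assumption~\ref{ass:A2} ($K>2n\kappa$), leaving a strictly positive multiple of $\sum\|\nabla w_i\|_{L^2}^2$ on the left. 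For the remaining terms of the form $\int |w_j||\nabla u_i||\nabla w_i|$, I would pull $\nabla u_i$ out in $L^\infty$ — but since we only have $u\in L^2(H^2)$, not $L^2(W^{1,\infty})$ in general dimension, I instead use Lemma~\ref{lem:reg}: estimate $\|\nabla u_i\|_{L^\infty}$ is not directly what we need; rather, write $\int|w_j||\nabla u_i||\nabla w_i| \le \|w_j\|_{L^2}\|\nabla u_i\|_{L^4}\|\nabla w_i\|_{L^4}$ or similar, then interpolate $\|\nabla w_i\|_{L^4}$ between $\|w_i\|_{L^2}$ and $\|\Delta w_i\|_{L^2}$ via Gagliardo–Nirenberg, apply Young with a small parameter $\gamma$ to absorb the $\|\Delta w_i\|_{L^2}^2$ term, and land at a bound of the form $g(t)\sum_i\|w_i\|_{L^2}^2 + (\text{small})\sum_i\|\Delta w_i\|_{L^2}^2$ with $g\in L^1(0,T)$ (here $g$ involves $\|\nabla u\|_{L^2(H^1)}^2$-type quantities, finite by regularity). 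The factor $\|\Delta w_i\|_{L^2}^2$ is the obstacle: it is not on the dissipation side of the plain $L^2$-energy estimate, so I would need to run the estimate instead at the level of the $H^1$ energy $\sum\|\nabla w_i\|_{L^2}^2$, differentiating it using Lemma~\ref{lem:timederiv}(iii), which produces $-2\langle\partial_t w_i,\Delta w_i\rangle$ and hence the full $\|\partial_t w_i - K\Delta w_i\|_{L^2}^2$-type coercivity (cf.\ Lemma~\ref{lem:equi}), giving a genuine $\|\Delta w_i\|_{L^2}^2$ on the good side to absorb the small terms.

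Having set up the correct energy — most likely $t\mapsto\sum_{i=0}^n\|\nabla w_i(t,\cdot)\|_{L^2(\Omega)}^2$ combined with a multiple of $\sum\|w_i\|_{L^2}^2$ — the concluding step is a direct application of Grönwall's lemma: since the energy vanishes at $t=0$ and satisfies $\frac{d}{dt}E(t)\le g(t)E(t)$ with $g\in L^1(0,T)$, we get $E\equiv 0$, hence $w_i\equiv 0$ and $u=\tilde u$. I expect the main obstacle to be exactly the bookkeeping described above: getting the cross terms involving $\nabla u$ controlled in dimension $d\le 3$ requires the Gagliardo–Nirenberg interpolation of Lemma~\ref{lem:reg} and running the estimate at the $H^1$ (rather than $L^2$) level so that a $\|\Delta w\|_{L^2}^2$ term is available to absorb the top-order remainder, while simultaneously keeping Assumption~\ref{ass:A2} in play to absorb the $2n\kappa\|\nabla w\|_{L^2}^2$ contribution coming from the non-constant part of the diffusion. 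Everything else is routine.
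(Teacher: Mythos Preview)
Your proposal is correct and follows essentially the same strategy as the paper: combine an $H^1$-level energy estimate (via Lemma~\ref{lem:timederiv}(iii), producing $\|\Delta w_i\|_{L^2}^2$ on the dissipation side) with the $L^2$-level one, use Assumption~\ref{ass:A2} to absorb the $2n\kappa$ contributions, invoke the interpolation Lemma~\ref{lem:reg} for $d\le 3$, and close by Gr\"onwall on $\sum_i(\|w_i\|_{L^2}^2+\|\nabla w_i\|_{L^2}^2)$. The paper's execution differs only tactically: it works from the strong form \eqref{eq:suniqueness_diff} (exploiting the cancellation $\nabla\cdot(u_j\nabla u_i-u_i\nabla u_j)=u_j\Delta u_i-u_i\Delta u_j$, so no gradient products appear), tests first with $-\Delta\bar u_i$ so that the cross terms become $\bar u_j\,\Delta u_{i,1}\,\Delta\bar u_i$ and are estimated via $\|\bar u_j\|_{L^\infty}$ and Lemma~\ref{lem:reg} directly (rather than your $L^4$--$L^4$ splitting on gradients), and uses a time-dependent Young parameter $\eta_\varepsilon(t)=\varepsilon/(1+n\kappa\sum_i\|\Delta u_{i,1}(t)\|_{L^2}^2)$ to guarantee the Gr\"onwall coefficient lies in $L^1(0,T)$.
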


\REVV{
\begin{proof}
Let $\left(u_{i,1}\right)_{0\leq i \leq n}$ and $\left(u_{i,2}\right)_{0\leq i \leq n}$ be two strong solutions to \eqref{eq:crossnplusone2} satisfying (i) and (ii) with initial datum $u^0$ 
and let us denote by $\bar u_i = u_{i,1} - u_{i,2}$. \REVTWO{The equation for the $i^{th}$ read as 
\begin{equation}\label{eq:icross}
\partial_t u_{i,k}= \sum_{j=0, j\neq i}^n K_{ij}(u_{j,k}\Delta u_{i,k}- u_{i,k} \Delta u_{j,k}), \quad k=1,2.
\end{equation}
Using the fact that $\sum_{j=0}^n u_{j,} = 1$, \REV{which implies that
	\begin{align*}
	\Delta u_{i,k} = \sum_{j=0}^n u_{j,k} \Delta u_{i,k} = \sum_{j=0}^n (u_{j,k} \Delta u_{i,k} - u_{i,k} \Delta u_{j,k}) = \sum_{j=0, j \neq i}^n (u_{j,k} \Delta u_{i,k} - u_{i,k} \Delta u_{j,k}),
	\end{align*}
}
and subtracting the equations for $k=1,2$ we obtain that for all $0\leq i \leq n$},
\begin{equation}\label{eq:suniqueness_diff}
\partial_t \bar u_i - K \Delta \bar u_i = \sum_{j=0, j\neq i}^n(K_{ij}-K)(u_{j,2}\Delta \bar u_i + \bar u_j \Delta u_{i,1} - (u_{i,2} \Delta \bar u_j + \bar u_i \Delta u_{j,1})).
\end{equation}
As a first step we multiply (\ref{eq:suniqueness_diff}) by $-\Delta \bar u_i$ and integrate over $\Omega$. This yields, almost everywhere in $(0,T)$ and for all $\eta>0$,
\begin{align*}
 &\frac{d}{dt}\int_\Omega |\nabla \bar u_i|^2 + K \int_\Omega(\Delta \bar u_i)^2   = \sum_{j=0, j\neq i}^n(K_{ij}-K)\int_\Omega (u_{j,2}(\Delta \bar u_i)^2 + \bar u_j \Delta u_{i,1}\Delta \bar u_i - u_{i,2} \Delta \bar u_j\Delta \bar u_i 
 - \bar u_i \Delta u_{j,1}\Delta \bar u_i)\\
 &\le n\kappa \int_\Omega (\Delta \bar u_i)^2 + \frac{\kappa}{2\eta}\sum_{j=0,j\neq i}^n \|\bar u_j\|_{L^\infty(\Omega)}^2 + \frac{n\kappa\eta}{2}\|\Delta u_{i,1}\|_{L^2(\Omega)}^2\|\Delta \bar u_i\|_{L^2(\Omega))}^2\\
  &+\kappa  \left(\frac{n}{2}\|\Delta \bar u_i\|_{L^2(\Omega)}^2 + \sum_{j=0,j\neq i }^n\frac{1}{2}\|\Delta \bar u_j\|_{L^2(\Omega)}^2 + \frac{n}{2\eta}\|\bar u_i\|_{L^\infty(\Omega)}^2 + \frac{\eta}{2} \|\Delta \bar u_i\|_{L^2(\Omega)}^2 \sum_{j=0,j\neq i }^n\|\Delta u_{j,1}\|_{L^2(\Omega)}^2\right).
 \end{align*}%
 Let us now choose $\eps >0$ small enough and define $\eta_\eps(t) = \frac{\eps}{1+ n\kappa \sum_{i=0}^n \|\Delta u_{i,1}(t,\cdot)\|_{L^2(\Omega)}^2}$. 
 Choosing $\eta = \eta_\eps(t)$ in the above inequality and summing over $i=0,\ldots, n$, we obtain that for almost all $t\in (0,T)$:
 \begin{equation}\label{eq:suniqueness_1}
  \sum_{i=0}^n\left(\frac{d}{dt}\int_\Omega |\nabla \bar u_i|^2\;dx + (K- 2n\kappa - \eps) \int_\Omega(\Delta \bar u_i)^2\;dx\right)  \le \frac{n\kappa}{\eta_\eps} \sum_{i=0}^n\|\bar u_i\|_{L^\infty(\Omega)}^2.
 \end{equation}%
 Let us point out that $\frac{1}{\eta_\eps} \in L^1(0,T)$ by definition. To estimate the terms on the right hand side, we make use of Lemma~\ref{lem:reg} and obtain that for all $\gamma >0$, there exists $C_\gamma >0$ such that for all $0\leq i \leq n$, 
 \begin{equation}\label{eq:auxineq}
 \|\bar u_i\|_{L^{\infty}(\Omega)}^2 \leq \gamma \|\Delta \bar u_i \|_{L^2(\Omega)}^2 + C_\gamma \left( \|\nabla \bar u_i \|_{L^2(\Omega)}^2 + \|\bar u_i\|_{L^2(\Omega)}^2\right)
 \end{equation}
 Thus, choosing $\gamma = \eps$ in (\ref{eq:auxineq}), we obtain 
 \begin{equation}\label{eq:suniqueness_2}
  \sum_{i=0}^n\left(\frac{d}{dt}\int_\Omega |\nabla \bar u_i|^2 + (K- 2n\kappa - 2\eps) \int_\Omega(\Delta \bar u_i)^2\right)  \le g_\eps \sum_{i=0}^n\left(\|\bar u_i\|_{L^2(\Omega)}^2 + \|\nabla \bar u_i\|_{L^2(\Omega)}^2 \right),
 \end{equation}%
for some function $g_\eps\in L^1(0,T)$. In order to control the $\|\bar u_i\|_{L^2(\Omega)}^2$-term on the right hand side, we consider the weak form of \eqref{eq:suniqueness_diff} and chose $\bar u_i$ as a test function. We obtain
 \begin{align*}
   & \frac{d}{dt} \int_\Omega \left|{\bar u_i}\right|^2\;dx + K \int_\Omega |\nabla \bar u_i|^2\;dx = \\
   & \sum_{j=0, j\neq i}^n(K_{ij}-K)\int_\Omega 2 \bar u_i\nabla u_{j,1} \cdot \nabla\bar  u_i + \nabla \bar u_j \cdot (\bar u_i \nabla u_{i,2} + u_{i,2} \nabla \bar u_i) \\
   & - \sum_{j=0, j\neq i}^n(K_{ij}-K)\int_\Omega \nabla \bar u_i\cdot(\bar u_i \nabla u_{j,2} + u_{j,2} \nabla \bar u_i) + \nabla u_{i,1} \cdot (\bar u_i \nabla \bar u_j + \bar u_j \nabla \bar u_i )\\
    & = \sum_{j=0, j\neq i}^n(K_{ij}-K)\int_\Omega \bar u_i\nabla u_{j,1} \cdot \nabla\bar  u_i  - u_{j,2} |\nabla \bar u_i|^2 + u_{i,2} \nabla \bar u_i \cdot \nabla \bar u_j - \bar u_j \nabla u_{i,1} \cdot \nabla \bar u_i\\
 \end{align*}
 Arguing as above and using again Lemma~\ref{lem:reg}, we eventually obtain that for all $\eps>0$, there exists a function $\widetilde{g}_\eps\in L^1(0,T)$ such that
 \begin{align}\label{eq:suniqueness_3}
  &\sum_{i=0}^n\left(\frac{d}{dt}\int_\Omega \left|\bar u_i\right|^2 + (K- 2n\kappa - \eps) \int_\Omega|\nabla  \bar u_i|^2 - \eps \int_\Omega |\Delta \bar u_i|^2\right) \le 
  \widetilde{g}_\eps \sum_{i=0}^n \left(\|\bar u_i\|_{L^2(\Omega)}^2 + \|\nabla \bar u_i\|_{L^2(\Omega)}^2\right).
 \end{align}
 Adding \eqref{eq:suniqueness_3} and \eqref{eq:suniqueness_2} and choosing $\eps$ small enough thus imply that
 \begin{align*}
  &\sum_{i=0}^n\left(\frac{d}{dt}\int_\Omega \bar u_i^2\;dx + \frac{d}{dt}\int_\Omega |\nabla\bar u_i|^2\;dx\right) \le h \sum_{i=0}^n \left(\|\bar u_i\|_{L^2(\Omega)}^2 + \|\nabla \bar u_i\|_{L^2(\Omega)}^2\right), 
 \end{align*}
 for some function $h \in L^1(0,T)$. As the initial data of $(u_{i,1})_{0\leq i \leq n}$ and $(u_{i,2})_{0\leq i \leq n}$ coincide in $H^1(\Omega)$, Gronwall's lemma yields the assertion.
 \end{proof}
 }

\subsection{Existence for a linear problem}

To prove Theorem~\ref{thm:existencestrong}, we begin by proving the existence of a strong solution to a truncated linearized approximate problem, which we present hereafter. 

\medskip

\REVV{
In this section and the following one, we fix the initial condition $u^0 = (u_0^0, \cdots, u_n^0)\in H^1(\Omega)^{n+1}$, and define for all $0\leq i \leq n$
$$
Z_i:= \left\{ u\in W, \; \nabla u \cdot \textbf{n} = 0, \; u(0) = u_i^0 \right\},
$$
and $Z:= Z_0\times \cdots \times Z_n$. 
}

\medskip

Let us assume for now that there exists a smooth solution $u:=(u_0, \ldots, u_n)$ to~\eqref{eq:cross} satisfying $\sum_{j=0}^n u_j = 1$.
Using again the fact that $\sum_{j=0}^n u_j = 1$, the equation for each component (\ref{eq:icross}) can be rewritten as
\begin{equation} \label{eq:strongicross} 
 \partial_t u_i - K \Delta u_i = \sum_{j=0, j\neq i}^n(K_{ij}-K)(u_j\Delta u_i- u_i \Delta u_j),
\end{equation}
where the \REV{the positive constant $K$ is }defined in (\ref{eq:defKi}).

\medskip


Let $\tilde u :=(\tilde u_0, \tilde u_1, \ldots, \tilde u_n)\in \REVV{Z}$. We consider the following linear, regularized problem:

\REV{\begin{equation}\label{eq:cross_truncated}
\left\{
\begin{array}{rll}
\partial_t u_i - K\Delta u_i & = \sum_{j=0, j\neq i}^n(K_{ij}-K)({\tilde u_j^\diamond}\Delta u_i- {\tilde u_i^\diamond} \Delta u_j),& \\
\nabla u_i \cdot \textbf{n} & = 0 ,& \\
\end{array} \right.
\quad i=0,\ldots, n,
\end{equation}
with $x^\diamond:= \max(0, \min(1,x))$. Note that this implies $0 \le x^\diamond \le 1$ for every $x \in \mathbb{R}$.
}
Also note that even though we are dealing with a linear problem, we employ a fixed point strategy as in \cite{Breden2017}.
\begin{lemma}[Existence of a strong solution to the linearized problem]\label{lem:linear}
\REVV{For all $\tilde u:=(\tilde u_0,\ldots, \tilde u_n) \in Z$, there exists a unique solution 
$u:=(u_0,\ldots, u_n) \in Z$ to \eqref{eq:cross_truncated}.} In addition, the three following a priori estimates hold
\begin{align}\label{eq:ineq1}
\sum_{i=0}^n\mathop{\sup}_{0\leq t \leq T} \|\nabla u_i(\cdot, t)\|_{L^2(\Omega)}^2 + \sum_{i=0}^n\int_0^T\int_\Omega (\Delta u_i)^2 & 
\leq C_0,\\ \label{eq:ineq2}
\mathop{\sup}_{0\leq s \leq T}\sum_{i=0}^n \left\| u_i(t,\cdot)\right\|^2_{L^2(\Omega)} & \leq C_1,\\ \label{eq:ineq3}
\sum_{i=0}^n \left\| \partial_t u_i\right\|^2_{L^2(0,T; L^2(\Omega))} & \leq C_2,\\ \nonumber
\end{align}
with 
\begin{align} \label{eq:C0}
 C_0 & := 2 \max\left(1, \frac{1}{K - 2n\kappa }\right) \sum_{i=0}^n\|\nabla u_i^0\|_{L^2(\Omega)}^2,\\ \label{eq:C1}
 C_1 & := e^{2n \kappa  T}\left( \sum_{i=0}^n \left\| u_i^0\right\|^2_{L^2(\Omega)} + 2n \kappa  C_0 \right),\\ \label{eq:C2}
 C_2& := \left( K + 2n\kappa \right) C_0.
\end{align}
\end{lemma}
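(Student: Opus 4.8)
The plan is to treat the linear system \eqref{eq:cross_truncated} by a fixed-point argument in the Banach space $W_0$ (shifted by a fixed function matching the initial data), using the equivalence of norms from Lemma~\ref{lem:equi}, and then to derive the three a priori estimates by energy methods which, crucially, also give the smallness needed to close the fixed-point contraction. First I would fix $\tilde u \in Z$ and, for a given $v := (v_0,\dots,v_n) \in Z$, define $w := (w_0,\dots,w_n) \in Z$ to be the unique solution of the decoupled heat-type problems
\begin{equation*}
\partial_t w_i - K\Delta w_i = \sum_{j=0,\,j\neq i}^n (K_{ij}-K)\big(\tilde u_j^\diamond \Delta v_i - \tilde u_i^\diamond \Delta v_j\big), \qquad \nabla w_i \cdot \mathbf{n} = 0, \quad w_i(0) = u_i^0,
\end{equation*}
where the right-hand side lies in $L^2((0,T);L^2(\Omega))$ since $v \in [L^2((0,T);H^2(\Omega))]^{n+1}$ and $|\tilde u_j^\diamond| \le 1$; existence, uniqueness and the bound $w_i \in W_n$ follow from standard maximal parabolic regularity for the heat equation with Neumann conditions (e.g. \cite[Section~7.1]{Evans} together with elliptic regularity). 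This defines a map $\Phi: Z \to Z$, and a fixed point of $\Phi$ is exactly a solution of \eqref{eq:cross_truncated}.

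Next I would show $\Phi$ is a contraction on $Z$ equipped with the metric induced by $\|\cdot\|_{\widetilde W_0}$ on differences. For $v^{(1)}, v^{(2)} \in Z$ the difference $\delta w := \Phi(v^{(1)}) - \Phi(v^{(2)})$ has zero initial data and solves the same system with $v$ replaced by $\delta v := v^{(1)}-v^{(2)}$, so $\delta w \in W_0$. Applying $\|\cdot\|_{W_0}$ to the $i$-th equation, using $|\tilde u_i^\diamond|, |\tilde u_j^\diamond| \le 1$ and $|K_{ij}-K| \le \kappa$, and summing over $i$ gives
\begin{equation*}
\sum_{i=0}^n \|\delta w_i\|_{W_0}^2 = \sum_{i=0}^n \Big\| \sum_{j\neq i}(K_{ij}-K)(\tilde u_j^\diamond \Delta \delta v_i - \tilde u_i^\diamond \Delta \delta v_j)\Big\|_{L^2(L^2)}^2 \le (2n\kappa)^2 \sum_{i=0}^n \|\Delta \delta v_i\|_{L^2(L^2)}^2,
\end{equation*}
after a Cauchy-Schwarz in the finite sum over $j$. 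By Lemma~\ref{lem:equi}, $\|\Delta \delta v_i\|_{L^2(L^2)} \le \tfrac1K\|\delta v_i\|_{\widetilde W_0} \le \tfrac{1}{K}\|\delta v_i\|_{W_0}$ up to the (dimension-free) equivalence constants; Assumption~\ref{ass:A2}, $K > 2n\kappa$, is precisely what makes the resulting factor strictly less than $1$, so $\Phi$ is a contraction and Banach's fixed point theorem yields the unique solution $u \in Z$. (One must be slightly careful that the norm equivalence constant between $\|\cdot\|_{W_0}$ and $\|\cdot\|_{\widetilde W_0}$ in Lemma~\ref{lem:equi} is exactly $1$ on the lower side — which it is, by \eqref{eq:normestimate} — so no extra constant spoils the contraction.) The main obstacle here is exactly this bookkeeping: ensuring the contraction constant is $2n\kappa/K < 1$ and not something larger coming from a clumsy use of Cauchy-Schwarz; one has to exploit that for fixed $i$ there are $n$ terms and the coefficient of each is controlled by $\kappa$, giving $n\kappa$ for the $\Delta \delta v_i$ contribution and $n\kappa$ for the $\sum_j \Delta \delta v_j$ contribution after summing over $i$, hence $2n\kappa$ total.

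Finally I would establish the three a priori estimates \eqref{eq:ineq1}--\eqref{eq:ineq3} for the solution $u$. For \eqref{eq:ineq1}, multiply the $i$-th equation of \eqref{eq:cross_truncated} by $-\Delta u_i$, integrate over $\Omega$, use Lemma~\ref{lem:timederiv}(iii) to write the time term as $\tfrac12\tfrac{d}{dt}\|\nabla u_i\|_{L^2}^2$, bound the right-hand side using $|\tilde u^\diamond_\bullet| \le 1$, $|K_{ij}-K| \le \kappa$ and Cauchy-Schwarz to get $\le 2n\kappa \sum_i \|\Delta u_i\|_{L^2}^2$ after summing over $i$; this gives $\tfrac12\tfrac{d}{dt}\sum_i\|\nabla u_i\|_{L^2}^2 + (K-2n\kappa)\sum_i\|\Delta u_i\|_{L^2}^2 \le 0$, and integrating in $t$ produces \eqref{eq:ineq1} with $C_0$ as in \eqref{eq:C0}. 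For \eqref{eq:ineq2}, test the $i$-th equation with $u_i$ itself, integrate by parts to move the Laplacians onto the test function, use Cauchy-Schwarz and \eqref{eq:ineq1} to absorb the $\Delta$-terms, arriving at $\tfrac12\tfrac{d}{dt}\sum_i\|u_i\|_{L^2}^2 + K\sum_i\|\nabla u_i\|_{L^2}^2 \le 2n\kappa\sum_i\|u_i\|_{L^2}\|\Delta u_i\|_{L^2}$ or a variant, and then Gronwall together with \eqref{eq:ineq1} yields \eqref{eq:ineq2} with $C_1$ as in \eqref{eq:C1}. For \eqref{eq:ineq3}, write $\partial_t u_i = K\Delta u_i + \sum_{j\neq i}(K_{ij}-K)(\tilde u_j^\diamond\Delta u_i - \tilde u_i^\diamond\Delta u_j)$, take $L^2((0,T);L^2(\Omega))$ norms, bound the right-hand side by $(K+2n\kappa)$ times the square root of $\sum_i\|\Delta u_i\|_{L^2(L^2)}^2$ and invoke \eqref{eq:ineq1} to obtain \eqref{eq:ineq3} with $C_2$ as in \eqref{eq:C2}. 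None of these computations is deep; the only subtlety is making sure the constants are tracked exactly as stated, and that Lemma~\ref{lem:timederiv}(iii) is legitimately applicable (which it is, since $u_i \in W_n$).
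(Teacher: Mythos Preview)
Your proposal is correct and follows essentially the same approach as the paper: the same fixed-point map on $Z$, the same use of Lemma~\ref{lem:equi} to obtain contraction with constant $2n\kappa/K<1$, and the same three energy estimates (test with $-\Delta u_i$, then $u_i$, then read off $\partial_t u_i$ directly). The only cosmetic difference is that the paper runs the contraction estimate with the metric $\sum_i\|\cdot\|_{\widetilde W_0}$ (an $\ell^1$ sum of norms, via the triangle inequality) whereas you use $\big(\sum_i\|\cdot\|_{\widetilde W_0}^2\big)^{1/2}$; both yield the same constant, and the paper phrases the proof of \eqref{eq:ineq1} via the pointwise coercivity $\xi^T(P-B(\tilde u))\xi\ge(K-2n\kappa)\|\xi\|_2^2$ rather than component-wise, which is exactly your estimate in matrix form.
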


\begin{proof}
\REV{
\itshape Step 1 (Existence): \normalfont 
Let $\tilde u:=(\tilde u_0,\ldots, \tilde u_n) \in Z$. For all $(\overline u_0, \ldots, \overline u_n) \in \REVV{Z}$, consider the problem 
\begin{equation}\label{eq:cross_truncated2}
\left\{
\begin{array}{rll}
\partial_t u_i - K\Delta u_i & = \sum_{j=0, j\neq i}^n(K_{ij}-K)({\tilde u_j^\diamond}\Delta \overline u_i- {\tilde u_i^\diamond} \Delta \overline u_j),& \\
\nabla u_i \cdot \textbf{n} & = 0 ,& \\
\end{array} \right.
\quad i=0,\ldots, n.
\end{equation}
Now as $\Delta \overline u_i \in L^2((0,T);L^2(\Omega))$ and $\tilde u_i^\diamond \in L^\infty((0,T);L^\infty(\Omega))$ for $i=0,\ldots, n$, standard theory for linear parabolic equations yields 
the existence of a \REVV{unique} solution $(u_i)_{0\leq i \leq n} \in \REVV{Z}$. Let us denote by
$$
F: \left\{
\begin{array}{lll}
 Z & \to & Z\\
 (\bar u_{i})_{0\leq i \leq n} & \mapsto & (u_i)_{0\leq i \leq n}\\
\end{array}
\right.
$$
the application such that $(u_i)_{0\leq i \leq n} \in Z$ is the unique solution to (\ref{eq:cross_truncated2}). 
}

\REV{
Let now $(\overline u_i^1)_{0\leq i \leq n},(\overline u_i^2)_{0\leq i \leq n}\in Z$ and let $(u_i^1)_{0\leq i \leq n}:= F\left( (\bar u_i^1)_{0\leq i \leq n}\right)$ and $(u_i^2)_{0\leq i \leq n}:= F\left( (\bar u_i^2)_{0\leq i \leq n}\right)$.
Then, we have
\begin{equation*}
\left\{
\begin{array}{rll}
\partial_t (u_i^1-u_i^2) - K\Delta (u_i^1-u_i^2) & = \sum_{j=0, j\neq i}^n(K_{ij}-K)({\tilde u_j^\diamond}\Delta  (\overline u_i^1-\overline u_i^2)- {\tilde u_i^\diamond} \Delta (\overline  u_j^1-\overline u_j^2)),& \\
\nabla (u_i^1-u_i^2) \cdot \textbf{n} & = 0 ,& \\
(u_i^1-u_i^2)(0,\cdot) & = 0
\end{array} \right.
\quad i=0,\ldots, n.
\end{equation*}
Taking the $L^2((0,T);L^2(\Omega))$-norm on both sides, noting that both $\bar u_i^1 - \bar u_i^2 \in W_0$ and $u_i^1 - u_i^2 \in W_0$, and (\ref{eq:normestimate}), yields
\begin{align*}
\|u_i^1 - u_i^2 \|_{\widetilde W_0} \le \|u_i^1 - u_i^2 \|_{W_0} &\le \kappa n \|\Delta(\overline u_i^1 - \overline u_i^2)\|_{L^2(\Omega \times (0,T))} + \kappa \sum_{j=0, j\neq i}^n \|\Delta (\overline  u_j^1-\overline u_j^2)\|_{L^2(\Omega \times (0,T))}\\
&= \frac{\kappa n}{K} K\|\Delta(\overline u_i^1 - \overline u_i^2)\|_{L^2(\Omega \times (0,T))} + \sum_{j=0, j\neq i}^n \frac{\kappa }{K}K\|\Delta (\overline  u_j^1-\overline u_j^2)\|_{L^2(\Omega \times (0,T))},
\end{align*}
\REVTWO{where we used that by Assumption \ref{ass:A2} we have $|K_{ij} - K| \le \kappa$ and by definition $\tilde u_i^\diamond \le 1$.} Summing over $i=0,\ldots,n$, we obtain
\begin{align}
\begin{split}
\sum_{i=0}^n \|u_i^1 - u_i^2 \|_{\widetilde W_0} &\le  \sum_{i=0}^n\left(\frac{\kappa n}{K} K\|\Delta(\overline u_i^1 - \overline u_i^2)\|_{L^2(\Omega \times (0,T))} + \sum_{j=0, j\neq i}^n \frac{\kappa }{K}K\|\Delta (\overline  u_j^1-\overline u_j^2)\|_{L^2(\Omega \times (0,T))}\right)\\ 
&\le \underbrace{\frac{2\kappa n}{K}}_{< 1} \sum_{i=0}^n \|\bar u_i^1 - \bar u_i^2\|_{\widetilde W_0}.
\end{split}\label{eq:contraction}
\end{align}

\REVV{
Let us now introduce the distance $d:Z\times Z \to \R_+$ defined by
$$
\forall u:=(u_i)_{0\leq i \leq n},\, v:=(v_i)_{0\leq i \leq n} \in Z, \quad d(u,v):= \sum_{i=0}^n \|u_i - v_i\|_{\widetilde W_0}.
$$
Then, $(Z,d)$ is a complete metric space, and (\ref{eq:contraction}) implies that the map $F$ is a contraction with respect to $d$. 
Banach's fixed point theorem then ensures the existence and uniqueness of a strong solution $ u = (u_0,\ldots, u_n) \in Z$ to the equation \eqref{eq:cross_truncated}.}
}

\medskip

\itshape Step 2 (A priori estimates): \normalfont 
Denoting again by $u:=(u_0, \ldots, u_n)$, we can rewrite the system (\ref{eq:cross_truncated}) as follows:
\begin{align}\label{eq:matrix}
\partial_t u = (P-B(\tilde u) )\Delta u
\end{align}
where $P= KI$ with $I$ the identity matrix in $\R^{(n+1)\times (n+1)}$ and
\begin{align*}
B(\tilde u):=\begin{pmatrix}
\displaystyle \sum_{j=0, j\neq 0}^n(K_{0j}-K)\tilde{u}_j^\diamond & \dots & -(K_{0n}-K)\tilde{u}_0^\diamond\\
\vdots & \ddots & \vdots \\
-(K_{n0}-K)\tilde{u}_n^\diamond & \dots &   \sum_{j=0, j\neq n}^n(K_{nj}-K)\tilde{u}_j^\diamond\\
\end{pmatrix}.
\end{align*}
For any $\xi \in \RR^{n+1}$, we have, using Assumption \ref{ass:A2},
\begin{align*}
 \xi^T (P-B(\tilde u)) \xi &= \sum_{i=0}^n K \xi_i^2 + \sum_{i=0}^n\sum_{j=0,i\neq j}^n (K_{ij}-K)\left( \tilde u_j^\diamond \xi_i^2 - \tilde u_i^\diamond \xi_i\xi_j\right)\\
 &\ge \left(\displaystyle K - n\kappa \right) \|\xi\|_2^2 - \sum_{i=0}^n\sum_{j=0,i\neq j}^n (K_{ij}-K) \tilde u_i^\diamond \xi_i\xi_j.\\
\end{align*}
Besides, using again Assumption \ref{ass:A2}, it holds that
\begin{align*}
 &\left| \sum_{i=0}^n \sum_{j=0,i\neq j}^n (K_{ij} - K)\tilde u_i^\diamond \xi_i\xi_j\right| \le \sum_{i=0}^n \sum_{j=0,i\neq j}^n\kappa |\tilde u_i^\diamond| |\xi_i\xi_j|\\
 &\le\frac{1}{2}\sum_{i=0}^n\sum_{j=0,i\neq j}^n\kappa \left(\xi_i^2+\xi_j^2\right)
 \le \frac{n}{2}\kappa  \|\xi\|_2^2 + \frac{n}{2}\kappa  \|\xi\|_2^2.
\end{align*}
We thus obtain
\begin{align}\label{eq:coercivity}
 \xi^T (P-B(\tilde u)) \xi \ge \left( K \; - 2n\kappa  \right) \|\xi\|_2^2.
\end{align}
Now multiplying \eqref{eq:matrix} by the vector $(-\Delta u_0, \ldots, -\Delta u_n)^T$ and integrating over $\Omega$, we obtain that for almost all $t\in (0,T)$,
\begin{align*}
-\sum_{i=0}^n(\partial_t u_i, \Delta u_i)_{L^2(\Omega)} +  \left( K - 2n\kappa  \right) \sum_{i=0}^n\int_\Omega (\Delta u_i)^2\;dx \le 0,
\end{align*}
which implies, using Lemma \ref{lem:timederiv} and integrating in time between $0$ and $t$, 
$$
\sum_{i=0}^n\|\nabla u_i(\cdot, t)\|_{L^2(\Omega)}^2 + \left( K - 2n\kappa  \right)\sum_{i=0}^n\int_0^t\int_\Omega (\Delta u_i)^2\;dxdt 
\le \sum_{i=0}^n\|\nabla u_i^0\|_{L^2(\Omega)}^2.
$$
We thus get
\begin{align}\label{eq:h2linear}
\sum_{i=0}^n\sup_{0\leq t \leq T} \|\nabla u_i(\cdot, t)\|_{L^2(\Omega)}^2 + &\left( K - 2n\kappa  \right)\sum_{i=0}^n\int_0^T\int_\Omega (\Delta u_i)^2\;dxdt
\le 2 \sum_{i=0}^n\|\nabla u_i^0\|_{L^2(\Omega)}^2, 
\end{align}
which immediately yields (\ref{eq:ineq1}).
On the other hand, multiplying \eqref{eq:matrix} by the vector $(u_0, \ldots, u_n)^T$ and integrating over $\Omega$, we obtain that for almost all $t\in (0,T)$,
\begin{align*}
\sum_{i=0}^n\frac{d}{dt}\left( \left\| u_i(t,\cdot)\right\|^2_{L^2(\Omega)}\right) +\sum_{i=0}^n\int_\Omega K |\nabla u_i|^2\;dx & = \sum_{i=0}^n \sum_{j=0, j\neq i}^n (K_{ij}-K)\int_\Omega ({\tilde u_j^\diamond}u_i \Delta u_i- {\tilde u_i^\diamond}u_i \Delta u_j)\;dx \\
& \leq  2n \kappa  \sum_{i=0}^n \left( \|\Delta u_i(\cdot, t)\|_{L^2(\Omega)}^2 +  \|u_i(t,\cdot)\|_{L^2(\Omega)}^2 \right).\\
\end{align*}
Applying Gronwall's lemma, we thus obtain that 
\begin{equation}
\mathop{\sup}_{0\leq s \leq T}\sum_{i=0}^n \left\| u_i(t,\cdot)\right\|^2_{L^2(\Omega)} \leq e^{2n \kappa  T}\left( \sum_{i=0}^n \left\| u_i^0\right\|^2_{L^2(\Omega)} + 2n \kappa  \|\Delta u_i(\cdot, t)\|_{L^2(0,T; L^2(\Omega))}^2 \right),
\end{equation}
which yields (\ref{eq:ineq2}).
Lastly, using (\ref{eq:cross_truncated}), we obtain that
$$
\sum_{i=0}^n \left\| \partial_t u_i \right\|_{L^2(0,T, L^2(\Omega))}^2 \leq \left(  K + 2n \kappa  \right) \sum_{i=0}^n \left\| \Delta u_i \right\|^2_{L^2(0,T; L^2(\Omega))}, 
$$
which immediately yields estimate (\ref{eq:ineq3}).
\end{proof}


\subsection{Proof of Theorem \ref{thm:existencestrong}}

\begin{proof}[Proof of Theorem \ref{thm:existencestrong}]

Let us now assume that $u^0:=(u_0^0, u_1^0, \cdots, u_n^0) \in H^1(\Omega)^{n+1}$ satsifies $u^0(x)\in \overline{P}$ for almost all $x\in \Omega$ and use the same notation as in the preceding section. 

\medskip

\REV{Let us denote by $\mathcal{M}$ the set of functions $(u_0, \ldots, u_n)\in \REVV{Z}$ satisfying (\ref{eq:ineq1}), (\ref{eq:ineq2}) and (\ref{eq:ineq3})
with constants $C_0$, $C_1$ and $C_2$ defined by (\ref{eq:C0}), (\ref{eq:C1}) and (\ref{eq:C2}) respectively. For all $(\tilde u_0, \ldots, \tilde u_n)\in \mathcal{M}$, let us denote by 
$S\left( (\tilde u_0, \ldots, \tilde u_n) \right):= (u_0, \ldots, u_n)\in \REVV{Z}$, where $(u_0, \ldots, u_n)$ is the unique strong solution of (\ref{eq:cross_truncated}). 
}

\medskip

In view of Lemma \ref{thm:existencestrong}, the operator $S: \mathcal{M} \to \REVV{Z}$ is well-defined and self-mapping, i.e. $S(\mathcal{M}) \subset \mathcal{M}$. 
Moreover, due to the Aubin-Lions lemma \cite[Theorem~5.1, p. 58]{lions1969quelques}, the set $\mathcal{M}$ is a convex compact subset of $L^2((0,T);L^2(\Omega))$. 
\REV{In order to apply Brouwer's fixed point theorem, it remains to show that $S$ is continuous. We consider a sequence $(\tilde u^\delta_0, \ldots, \tilde u_n^\delta)_{\delta >0} \subset \mathcal{M}$ which strongly 
converges in $L^2((0,T);L^2(\Omega))$ to some $(\tilde u_0, \ldots, \tilde u_n)\in \mathcal{M}$. Thus if for all $\delta >0$, we denote by $(u_0^\delta, \ldots, u_n^\delta)\in \REVV{Z}$ the unique solution to
\begin{align}\label{eq:cross_truncated_regularized}
\begin{split}
\partial_t u^\delta_i(x,t) - K\Delta u^\delta_i&= \sum_{j=0, j\neq i}^n(K_{ij}-K)(({\tilde u^\delta_j)^\diamond}\Delta u^\delta_i- ({\tilde u^\delta_i})^\diamond \Delta u^\delta_j),\\
\nabla u^\delta_i \cdot \textbf{n} &= 0,\\ 
\end{split}
\quad 
i=0,\ldots,n.
\end{align}
with initial condition $(u_0^0, \ldots, u_n^0)$. Using the a priori estimates \eqref{eq:ineq1}, \eqref{eq:ineq2} and \eqref{eq:ineq2} we  obtain that the sequence
$(u_0^\delta, \ldots, u_n^\delta)_{\delta >0}$ is thus bounded in $W^{n+1}$. Up to the extraction of a subsequence, there exists $(u_0, \ldots,u_n)\in  W^{n+1}$ such that $(u_i^\delta)_{\delta>0}$ weakly 
converges in $W$ to $u_i$ for all $0\leq i \leq n$. In addition we have that $(\tilde u^\delta_i)^\diamond \to \tilde u_i^\diamond$ as the mapping $u \mapsto u^\diamond$ is Lipschitz continuous with Lipschitz constant $1$. 
This allows us to pass to the limit $\delta \to 0$ in (\ref{eq:cross_truncated_regularized}) in the distributional sense, and yields the continuity of $S$.
}
\medskip

Thus, we can apply Brouwer's fixed point theorem and conclude to the existence of a strong solution $(u_0,\ldots, u_n) \in W^{n+1}$ to the regularized system
\REV{
\begin{equation}\label{eq:cross_truncated3}
\left\{
\begin{array}{rll}
\partial_t u_i - K\Delta u_i & = \sum_{j=0, j\neq i}^n(K_{ij}-K)(u_j^\diamond\Delta u_i- u_i^\diamond \Delta u_j),& \\
\nabla u_i \cdot \textbf{n} & = 0 ,& \\
u_i(0,\cdot) & = u_i^0,  \\
\end{array} \right.
\quad i=0,\ldots, n,
\end{equation}
which satisfies the a priori estimates
\begin{align*}
\sum_{i=0}^n\mathop{\sup}_{0\leq t \leq T} \|\nabla u_i(\cdot, t)\|_{L^2(\Omega)}^2 + \sum_{i=0}^n\int_0^T\int_\Omega (\Delta u_i)^2 & 
\leq C_0,\\ 
\mathop{\sup}_{0\leq s \leq T}\sum_{i=0}^n \left\| u_i(t,\cdot)\right\|^2_{L^2(\Omega)} & \leq C_1,\\ 
\sum_{i=0}^n \left\| \partial_t u_i\right\|^2_{L^2(0,T; L^2(\Omega)}) & \leq C_2,\\ 
\end{align*}
where $C_0$, $C_1$ and $C_2$ are defined respectively in (\ref{eq:C0}), (\ref{eq:C1}) and (\ref{eq:C2}). 
}

\medskip

To end the proof, it remains to show that $u_i\geq 0$ almost everywhere in $(0,T)\times \Omega$ for all $0\leq i \leq n$. 
\REV{First note that $(u_0,\cdots,u_n)$ satisfies the system of equations
\begin{align}\label{eq:strong_linear}
 \left\{
 \begin{array}{l}
  \partial_t u_i - A_i(x,t) \Delta u_i = B_i(x,t) u_i^\diamond,\\
  \nabla u_i \cdot \textbf{n} = 0 \; \mbox{ on }(0,T)\times \partial \Omega,\\
  u_i(t=0,\cdot) = u_i^0,\\ 
 \end{array}
 \right.
\; i =0,\ldots, n,
\end{align}
with 
\begin{align}
A_i(x,t) := \left(K- \sum_{j=0, j\neq i}^n(K_{ij}-K)u^\diamond_j\right)\text{ and } B_i(x,t) :=\left(-\sum_{j=0, j\neq i}^n(K_{ij}-K) \Delta u_j\right).
\end{align}
In particular, if we consider $(A_i)_{0\leq i \leq n}$ and $(B_i)_{0\leq i \leq n}$ as given coefficients, \REVV{it holds that there exists a unique solution $(u_0, \cdots, u_n)\in Z$ to \eqref{eq:strong_linear}. 
Indeed, this can be shown using the same arguments as in the proof} of Lemma~\ref{lem:strong_uniqueness}, i.e. testing both with the difference of two solutions 
as well as the Laplace of that difference and using again Lemma~\ref{lem:reg}. 
}

\REV{
In order to show the desired non-negativity, we regularise the coefficients $A_i(x,t)$ (with respect to to the $x$ variable) by convolving it with a smooth kernel. \REVV{More precisely, 
let $\eta \in \cC^\infty(\R^d)$ be a standard non-negative mollifier so that $\int_{\R^d}\eta  = 1$ and for all $\eps>0$, let us denote by $\eta_\eps(x):=\frac{1}{\eps^d}\eta(x/\eps)$. 
Note that $K + n \kappa \geq A_i(x,t) \geq K - n\kappa$ a.e. in $(0,T)\times \Omega$. We extend $A_i(t,x)$ to a function defined over $(0,T)\times \R^d$ by defining
$$
\overline{A}_i(x,t):=\left\{
\begin{array}{ll}
 A_i(t,x) & \mbox{ if } x\in \Omega,\\
 K & \mbox{ otherwise}.\\
\end{array}
\right.
$$
We then define for all $\eps>0$,
$$
A_i^\eps(t,x):= \int_{\Omega} \overline{A}_i(t,y) \eta_\eps(x-y)\,dy, \quad \forall (t,x)\in (0,T)\times \Omega.
$$
Then, it holds that for all $\eps >0$, $K + n \kappa \geq A^\eps_i(x,t) \geq K - n\kappa$ a.e. in $(0,T)\times \Omega$ and for almost all $t\in (0,T)$, 
$\displaystyle A_i^\eps(t,\cdot) \mathop{\longrightarrow}_{\eps \to 0} A_i(t,\cdot)$ strongly in $L^p(\Omega)$ for all $0<p<\infty$. }
}

We also denote by $(u_i^\eps)_{0\leq i \leq n}$ 
the (unique) solution in $Z$ to
\begin{align}\label{eq:linear_regularised}
\partial_t u_i^\eps - A_i^\eps \Delta u_i^\eps = B_i(x,t) u_i^{\eps, \diamond},\; i =0,\ldots, n.
\end{align}
The existence and uniqueness of strong solutions in $Z$ to (\ref{eq:linear_regularised}) can be obtained again using similar arguments as above.
We claim that for all $0\leq i \leq n$, $u_i^\eps \ge 0$. Indeed, multiplying (\ref{eq:linear_regularised}) with the negative part $(u_i^\eps)_-$ and integrating over the spatial domain $\Omega$ gives
\begin{align*}
 \frac{d}{dt} \int_\Omega \left|(u_i^\eps)_-\right|^2  + C\int_\Omega |\nabla (u_i^\eps)_-|^2 \le  \underbrace{\int_\Omega B_i u_i^{\diamond,\eps} (u_i^\eps)_- }_{=0} + \int_\Omega |\nabla A_i^\eps| |\nabla (u_i^\eps)_-| (u_i^\eps)_-,
\end{align*}
i.e.
\begin{align*}
 \frac{d}{dt} \int_\Omega |(u_i^\eps)_-|^2  + (C-\gamma)\int_\Omega |\nabla (u_i^\eps)_-|^2 \le  \frac{1}{4\gamma} \|\nabla A_i^\eps \|_{L^\infty(\Omega)}\int_\Omega |(u_i^\eps)_-|^2,
\end{align*}
for any $\gamma >0$, so that Gronwall's lemma implies $u_i^\eps \ge 0$ a.e. as $(u_i^0)_- = 0$ a.e. in $(0,T)\times \Omega$. Besides, there exists  $(w_i)_{0\leq i \leq n}\in W_n^{n+1}$ such that, up to the extraction of a subsequence,
\begin{align*}
 \partial_t u_i^\eps \rightharpoonup \partial_t w_i \text{ in } L^2(0,T;L^2(\Omega))\;\text{ and }\; \Delta u_i^\eps  \rightharpoonup \Delta w_i \text{ in } L^2(0,T;L^2(\Omega)),
 \end{align*}
 as well as 
 \begin{align*}
 A_i^\eps &\to A_i \text{ in } L^p((0,T)\times \Omega),\quad \text{ for every } p < \infty, \text{ since } A_i\in L^\infty((0,T)\times \Omega),\\
 u_i^\eps &\to w_i \text{ in } L^2(0,T; L^2(\Omega)), \quad \text{by compactness of the  embedding}.
\end{align*}
The last convergence implies $u_i^{\eps,\diamond} \to w_i^\diamond$ in $L^2(0,T; L^2(\Omega))$. Thus testing \eqref{eq:linear_regularised} with $C^\infty_0$-functions 
(which are dense in $L^2(\Omega)$) we can pass to the limit $\eps \to 0$ in (\ref{eq:linear_regularised}) and conclude that 
\REVV{
\begin{itemize}
 \item $(w_i)_- = 0$ a.e. in $\Omega$ for a.e. $t \in (0,T)$ for all $0\leq i \leq n$;
 \item $w_i(t=0,\cdot) = u_i^0$;
 \item $(w_0,\cdots,w_n)$ is a solution in $Z$ to (\ref{eq:strong_linear}).
\end{itemize}
}
Using the uniqueness of strong solutions in $Z$ to (\ref{eq:strong_linear}), we thus obtain that $w_i = u_i$ for all $0\leq i \leq n$, which implies that $u_i\geq 0$. \REVTWO{Finally, to show the upper bound on the $u_i$ we note that the sum $\bar u = \sum_{i=0}^n u_i$ satisfies the heat equation
	\begin{align}\label{eq:heat}
	\left\{\begin{array}{l}
	\partial_t \bar u - K\Delta \bar u = 0,\\
	\nabla \bar u \cdot \mathbf{n} = 0,\\
	\bar u(x,0) = \sum_{i=0}^n u_i^0.
	\end{array}\right.
	\end{align}
As the initial data was such that $\sum_{i=0}^n u_i^0 = 1$, the unique solution to \eqref{eq:heat} is $\bar u(x,t)=1$ for a.e. $x \in \Omega$, $t \in (0,T)$ and thus $u(t,x)\in \overline{\mathcal{P}}$ almost everywhere.}
\end{proof}

\section{Weak strong stability}\label{sec:stability}

This section is devoted to the proof of Theorem~\ref{thm:stability}, which provides a weak-strong stability result provided that there exists a strong solution $u$ to the system of interest which satisfies the additional regularity property $\nabla u \in L^2(0,T; L^\infty(\Omega))$. 
%
\begin{proof}[Proof of Theorem~\ref{thm:stability}]
We start by rewriting the $i^{th}$ component of \eqref{eq:crossnplusone2} as 
\begin{align*}
 \int_\Omega \partial_t u_i\varphi \;dx + K\int_\Omega \nabla u_i \cdot \nabla \varphi\;dx = \int_\Omega [  \sum_{j=1,i\neq j}^n (K_{ij}-K)(u_j\nabla u_i - u_i\nabla u_i)] \cdot \nabla \varphi\;dx,
\end{align*}
for all $\varphi \in H^1(\Omega)$. Denoting by 
\begin{align*}
D(v):=\begin{pmatrix}
 \sum_{j=1}^n(K_{0j}-K)v_j& \dots & -(K_{0n}-K)v_0\\
\vdots & \ddots & \vdots \\
-(K_{n0}-K) {v_n} & \dots &   \sum_{j=0 }^{n-1}(K_{nj}-K)v_j\\
\end{pmatrix}.
\end{align*}
for all $v:=(v_i)_{0\leq i \leq n}\in \overline{\cP}$, we obtain that
\begin{equation}\label{eq:weakC}
 \int \partial_tu \Phi\;dx + \int_\Omega K\nabla u\cdot \nabla \Phi \;dx = \int_{\Omega} D(u)\nabla u \cdot \nabla \Phi\;dx,\text{ for all } \Phi \in [H^1(\Omega)]^{n+1},
\end{equation}
Since we know that $\sum_{i=0}^n u_i = 1$ and that $u_i \ge 0$ for $i=0,\ldots, n$, we immediately obtain that 
\begin{align*}
 \|D(u)\|_{L^\infty(\Omega)} \le 2 n\kappa ,
\end{align*}
in the sense of the spectral matrix norm. In addition, $D: \overline{\cP} \to \R^{(n+1)\times (n+1)}$ is Lipschitz continuous, with Lipschitz constant $2n \kappa$. 
Now we consider the difference of the respective weak formulations \eqref{eq:weakC} for $u$ and $\tilde u$ and obtain
\begin{align*}
\int_\Omega \partial_t (u-\tilde u)\Phi \;dx - K \int_\Omega  \left(\nabla u - \nabla \tilde u\right)\cdot \nabla \Phi \;dx = \int_\Omega \left[D(u)\nabla u - D(\tilde u)\nabla\tilde u\right]\cdot \nabla \Phi\;dx.
\end{align*}
Taking  $\Phi = (u-\tilde{u})(t, \cdot)$ (which belongs to $H^1(\Omega)$ for almost all $t\in (0,T)$) yields
\begin{align*}
&\frac{d}{dt}\frac{1}{2}\|u-\tilde{u}\|_{L^2(\Omega)}^2+K \|\nabla(u-\tilde{u})\|_{L^2(\Omega)}^2\\
&= - \int_\Omega (D(u)-D(\tilde{u}))\nabla u\cdot \nabla(u-\tilde{u}) \;dx-\int_\Omega D(\tilde{u})\nabla(u-\tilde{u})\cdot \nabla(u-\tilde{u})\;dx
\end{align*}
Using the fact that $\|D(\tilde{u})\|_{L^{\infty}(\Omega)} \leq 2n \kappa $ on the second term of the right hand side, we obtain
\begin{align*}
&\frac{d}{dt}\frac{1}{2}\|u-\tilde{u}\|_{L^2(\Omega)}^2+(K-2n\kappa) \|\nabla(u-\tilde{u})\|_{L^2(\Omega)}^2 \leq - \int_\Omega (D(u)-D(\tilde{u}))\nabla u \cdot \nabla(u-\tilde{u})\;dx.
\end{align*}
Since $d=1$, it holds that $L^\infty(\Omega) \subset H^1(\Omega)$ with continuous injection, which implies that $\|\nabla u \|_{L^\infty(\Omega)} \in L^2(0,T)$ (since $u\in L^2((0,T), H^2(\Omega))$. 
Thus, applying the weighted Young's inequality with $0 < \epsilon <(K-2n\kappa)$ and using the Lipschitz continuity of $D$ yield 
\begin{align*}
& \frac{d}{dt}\frac{1}{2}\|u-\tilde{u}\|_{L^2(\Omega)}^2+(K-2n\kappa-\epsilon) \|\nabla(u-\tilde{u})\|_{L^2(\Omega)}^2\leq \frac{1}{4\epsilon} \|(D(u)-D(\tilde{u}))\nabla u\|_{L^2(\Omega)}^2 \\
& \leq \frac{1}{4\epsilon}\|\nabla u\|_{L^\infty(\Omega)}^2 \|D(u)-D(\tilde{u})\|_{L^2(\Omega)}^2 \\
& \leq \frac{2n\kappa}{4\epsilon}\|\nabla u\|_{L^\infty(\Omega)}^2 \|u-\tilde{u}\|_{L^2(\Omega)}^2.\\ 
\end{align*}

Applying the differential form of the Gronwall lemma then implies that there exists $C'>0$ such that for all $t\in (0,T)$, 
\begin{align*}
\|u(t,\cdot)-\tilde{u}(t,\cdot)\|_{L^2(\Omega)}^2 \leq e^{C'\|\nabla u\|^2_{L^2((0,t), L^\infty(\Omega))}}\|u(0,\cdot)-\tilde{u}(0,\cdot)\|_{L^2(\Omega)}^2,
\end{align*}
with $C' = \frac{2n\kappa}{4\epsilon}$. Hence the result.
\end{proof}
\begin{remark} Let us remark that in dimension one, a strong solution $u$ in the sense of Theorem~\ref{thm:existencestrong} necessarily satisfies $\nabla u \in L^2(0,T; L^\infty(\Omega))$ since the injection $H^2(\Omega) \hookrightarrow W^{1,\infty}(\Omega)$ is continuous. To extend this results in higher dimension, one would need to prove the existence of solutions with this additional regularity property, for instance with more regular initial data.
\end{remark}

%
\section*{Appendix: Microscopic interpretation}
Following \cite{Burger2010}, we briefly describe a lattice based modelling approach and a formal way to obtain a \eqref{eq:cross} in the limit. We start with a one-dimensional lattice on which particles of $i=1,\ldots n$ 
species can jump to neighbouring sites. Let $\mathcal{T}_h$ denote an equidistant
grid of mesh size $h$, where a cell is either empty or can be occupied by at most one particle. We denote the probability to find a particle of species $i$ at location $x$ and time $t$ by
\begin{align*}
c_i(x,t)&=P(\text{particle of species $i$ at position $x$ at time $t$}),
\end{align*}
and assume that the motion of these particles is due to two different effects: Diffusion and exchange (switching) of particles of different species. To this end, we introduce the rates
\begin{align}\label{eq:rate1}
\Pi^{+}_{c_i}&=P(\text{jump of $c_i$ from position $x$ to $x+h$ in ($t,t+\Delta t)$})\\
&=K_{i0}(1-\rho) + \sum_{j=1,\,i\neq j}^n K_{ij}c_j,\\\nonumber
\Pi^{-}_{c_i}&=P(\text{jump of $c_i$ from position $x$ to $x-h$ in ($t,t+\Delta t)$})\\
&=K_{i0}(1-\rho) + \sum_{j=1,\,i\neq j}^n K_{ij}c_j.
\end{align}
Here $K_{i0}$ is a diffusion coefficient which controls the tendency of a particle to jump to a neighboring site. Since we restrict to at most one particle per site, 
this has to be modified by a factor of $(1-\rho)$, i.e. the particle can only jump if the target site is empty. On the other hand, in order to exchange places with a particle from a different species, 
the target site has to be occupied and thus, for the second term we have to multiply the rate $K_{ij}$ with $c_j$. 

Now we consider the following cases: If $K_{i0} \gg K_{ij}$, then the probability of switching is small compared to that of diffusion and the effect of size exclusion will be essential. 
If, on the other hand $K_{i0} \ll K_{ij}$, switching will dominate and size exclusion will not play a role anymore. Note that in this case, $\rho$, which is the sum of all densities, remains constant.

Our subsequent analysis deals with the case when $K_{i0} \approx K_{ij}$, which is the most interesting. In fact, let us rewrite \eqref{eq:rate1} as follows:
\begin{align*}
\Pi^{+}_{c_i}&=K_{i0}(1-\sum_{j=1,i\neq j}^n c_j - c_i) + \sum_{j=1,\,i \neq j}^n K_{ij}c_j,\\\nonumber
&= K_{i0}(1- c_i) + \sum_{j=1,\,i\neq j}^n (K_{ij}-K_{i0})c_j.
\end{align*}
Now if $K_{i0} \approx K_{ij}$, the switching will effectively aneal the size exclusion effect. In other words, it does not make a difference whether a target site is occupied by a particle of species $j$ or if it is empty 
since in both cases, the particle at the source site can reach this target: Either by jumping to the empty cell or by switching positions. The resulting PDE can be written as 
\begin{align*}
\partial_t c_i &= \nabla \cdot (K_{i0}((1-c_i)\nabla c_i + c_i\nabla c_i + \sum_{j=1,i\neq j}^n (K_{i0}-K_{ij})(c_j \nabla c_i - c_i \nabla _j))\\
&=\nabla \cdot (K_{i0}c_i + \sum_{j=1,i\neq j}^n (K_{i0}-K_{ij})(c_j \nabla c_i - c_i \nabla c_j)),\quad i=1,\ldots, n.
\end{align*}
which reveals that we are dealing with a perturbation of the heat equations, as already entailed in \eqref{eq:heatperturbation} in the introduction.

\section*{Acknowledgements}
The work of MB has been supported by ERC via Grant EU FP 7 - ERC Consolidator Grant 615216 LifeInverse. MB and JFP acknowledge support by the German Science Foundation DFG via EXC 1003 Cells in Motion Cluster of Excellence, M\"unster. VE acknowledges support by the ANR via the ANR JCJC COMODO project. VE and JFP are grateful to the DAAD/ANR for their support via the project 57447206. Furthermore, the authors would like to thank Robert Haller-Dintelmann (TU Darmstadt) for useful discussions. We would also like to thank the anonymous referee for his very useful comments and suggestions.


\begin{thebibliography}{00}
\bibitem{Aman1990}H.~Amann et al. {\em Dynamic theory of quasilinear parabolic equations: Reaction-diffusion systems}, Differential and Integral Equations, 3(1):13–75, 1990.
\bibitem{Berendsen2017}  J.~Berendsen, M.~Burger, J.-F.~Pietschmann {\em On a cross-diffusion model for multiple species with nonlocal interaction and size exclusion},  Nonlinear Analysis, Volume 159, Pages 10-39, 2017.
\bibitem{Brezis2010}H.~Brezis {\em Functional Analysis, Sobolev Spaces and Partial Differential Equations}, 2010: Springer, New York. 
\bibitem{Burger2010}  M.~Burger, M.~Di~ Francesco, J.-F.~Pietschmann, B.~Schlake {\em Nonlinear cross-diffusion with size exclusion},  SIAM Journal of Mathematical Analysis, 42(6):2842-2871, 2010.
\bibitem{Burger2015}M.~Burger, S.~Hittmeir, H.~Ranetbauer, M.-T.~Wolfram {\em Lane formation by side-stepping}, SIAM  Journal  on  Mathematical  Analysis, 48(2):981–1005, 2016.
\bibitem{chapman1}M.~Bruna, S.~J.~Chapman {\em Diffusion of multiple species with excluded-volume effects}, The Journal of chemical physics, 137(20):204116, 2012.
\bibitem{chapman2}M.~Bruna, S.~J.~Chapman {\em Excluded-volume  effects  in  the  diffusion of hard spheres}, Physical Review E, 85(1):011103, 2012.
\bibitem{Chen2004}L.~Chen, A.~J\"{u}ngel {\em Analysis of a multidimensional parabolic population model with strong cross-diffusion}, SIAM journal on mathematical analysis, 36(1):301–322, 2004.
\bibitem{Chen2006}L.~Chen, A.~J\"{u}ngel {\em Analysis of a parabolic cross-diffusion population model without self-diffusion}, Journal of Differential Equations, 224(1):39–59, 2006.
\bibitem{Chen2014} Y.~Chen, T.~Kolokolnikov {\em A minimal model of predator swarm interactions}, Journal of The Royal Society Interface, 11(94):20131208, 2014.
\bibitem{Chen2018} X. Chen and A. J\"ungel {\em Weak-strong uniqueness of renormalized solutions to reaction-cross-diffusion systems}, Submitted for publication, 2018, http://www.asc.tuwien.ac.at/~juengel/publications/pdf/p18xchen.pdf
\bibitem{Dafermos2005} C.~M.~Dafermos {\em Hyperbolic Conservation Laws in Continuum Physics}, 2005: Springer-Verlag Berlin Heidelberg, Grundlehren der mathematischen Wissenschaften, 2. Auflage, volume 325.
\bibitem{Denk2003}R.~Denk, M.~Hieber, J.~Pr\"{u}ss {\em R-boundedness, Fourier
multipliers and problems of elliptic and parabolic type}, Mem. Amer.
Math. Soc. 166 (2003), no. 788.
\bibitem{Ehrlacher2017}  V.~Ehrlacher, A.~Bakhta {\em Cross-Diffusion systems with non-zero flux and moving boundary conditions}, ESAIM: Mathematical Modelling and Numerical Analysis, 2017.
\bibitem{Evans}  L.C.~Evans {\em Partial Differential Equations}, American Mathematical Society, 2010.
\bibitem{Breden2017} M.~Breden, L.~Desvillettes, K.~Fellner {\em Smoothness of moments of the solutions of discrete coagulation equations with diffusion}, Monatshefte f{\"u}r Mathematik, 183(3), 437--463, 2017.
\bibitem{Griepentrog2004}J.~A.~Griepentrog {\em On the unique solvability of a nonlocal phase separation problem for multicomponent systems}, Banach Center Publications, WIAS preprint 898, 66:153–164, 2004.
\bibitem{Griepentrog2010}J.~A.~Griepentrog, L.~Recke {\em Local existence, uniqueness and smooth dependence for nonsmooth quasilinear parabolic problems}, Journal of Evolution Equations, 10(2):341–375, 2010.
\bibitem{Grisvard1985}P.~Grisvard {\em Elliptic Problems in Nonsmooth Domains}, 1985: Pitman Advanced Publishing Program, London.
\bibitem{Juengel2012}  A.~J\"{u}ngel, I.~V.~Stelzer {\em Existence Analysis of Maxwell-Stefan Systems for Multicomponent Mixtures}, SIAM Journal of Mathematical Analysis, 45(4):2421-2440, 2012.
\bibitem{Juengel2015}  A.~J\"{u}ngel, N.~Zamponi {\em Analysis of degenerate Cross-Diffusion population models with volume filling}, Annales de l'Institut Henri Poincare (C) Nonlinear Analysis, Elsevier, 2015.
\bibitem{Juengel2015uniueness}  A.~J\"{u}ngel, X.~Chen {\em A note on the uniqueness of weak solutions to a class of cross-diffusion systems}, arXiv:1706.08812, 2017.
\bibitem{Juengel2015bddness}A.~J\"{u}ngel {\em The boundedness-by-entropy method for cross-diffusion systems}, Nonlinearity, 28(6):1963, 2015.
\bibitem{Kuefner1996}K.~H.~W.~K\"{u}fner {\em Invariant regions for quasilinear reaction-diffusion systems and applications to a two population model}, Nonlinear Differential Equations and Applications NoDEA, 3(4):421–444, 1996.
\bibitem{Leoni2009}G.~Leoni, G.~Rolland {\em A First Course in Sobolev Spaces}, AMS Graduate studies in mathematics, 2009.
\bibitem{Lepoutre2012}T.~Lepoutre, M.~Pierre, G.~Rolland {\em Global well-posedness of a conservative relaxed cross diffusion system}, SIAM Journal on Mathematical Analysis, 44(3):1674–1693, 2012.
\bibitem{Le2006}D.~Le,T.~T.~Nguyen {\em Everywhere regularity of solutions to a class of strongly coupled degenerate parabolic systems.} Communications in Partial Differential Equations, 31(2):307–324, 2006.
\bibitem{lions1969quelques}J.~L.~Lions {\em Quelques m\'ethodes de r\'esolution des probl\'emes aux limites non lin\'eaires.} Etudes math{\'e}matiques, Dunod, 1969.
\bibitem{Painter2009}K.~J.~Painter {\em Continuous models for cell migration in tissues and applications to cell sorting via differential chemotaxis},
Bulletin  of  Mathematical  Biology, 71(5):1117–1147, 2009.
\bibitem{Painter2002}K.~J.~Painter, T.~Hillen {\em Volume-filling and quorum-sensing in models for chemosensitive movement} Can. Appl. Math. Quart, 10(4):501–543, 2002.
\bibitem{Pietschmann2011}  M.~Di~Francesco, P.~A.~Markowich, J.-F.~Pietschman, M.-T.~Wolfram {\em On the Hughes' model for pedestrian flow: The one-dimensional case}, Journal of Differential Equations, Volume 250, 3: 1334-1362, 2011.
\bibitem{Protter1984}M.H. Protter, H. F. Weinberger, {\em Maximum principles in differential equations}, 1984, Springer-Verlag, New York
\bibitem{Schlake2011}B.~Schlake {\em Mathematical Models for Particle  Transport: Crowded Motion}, PhD thesis, WWU M\"{u}nster, 2011.
\bibitem{Showalta1994}R.~E.~Showalter {\em Hilbert Space Methods for Partial Differential Equations}, Electronic Journal of Differential Equations, Monograph 01, 1994.
\bibitem{Wiedemann2017}  E.~Wiedemann {\em Weak-Strong Uniqueness in Fluid Dynamics}, arXiv:1705.04220v1, 2017.
\bibitem{Ladyshenskaja68}
O.~A. Ladyzenskaja, V.~A. Solonnikov, and N.~N. Ural'ceva.
\newblock {\em Linear and Quasi-Linear Equations of Parabolic Type}.
\newblock American Mathematical Society, 1968.

\end{thebibliography}
\end{document}